%
%

\documentclass[11pt]{article}
\usepackage{amsfonts}
\usepackage{amssymb}
\usepackage{enumerate}
\usepackage{color, amsmath,amssymb, amsfonts, amstext,amsthm, latexsym}

\usepackage{amssymb, epsfig, amssymb, latexsym}
\usepackage{amsmath}
\usepackage{graphicx}
\usepackage{longtable}
\usepackage{hyperref}

\textheight 8.4in

\newtheorem{theorem}{Theorem}

 \newtheorem{lemma}{Lemma}
\newtheorem{definition}{Definition}
\newtheorem{remark}{Remark}



\newcommand{\om}{\omega}
\newcommand{\Om}{\Omega}
\renewcommand{\phi}{\varphi}

\renewcommand{\a}{\alpha}

\newcommand{\R}{{\mathbb R}}
   
   \newcommand{\EX}{{\Bbb{E}}}
   \newcommand{\PX}{{\Bbb{P}}}


\newcommand{\BE}{\begin{equation}}
\newcommand{\EE}{\end{equation}}
\newcommand{\BEN}{\begin{equation*}}
\newcommand{\EEN}{\end{equation*}}
\newcommand{\BAL}{\begin{align}}
\newcommand{\EAL}{\end{align}}
\newcommand{\BAN}{\begin{align*}}
\newcommand{\EAN}{\end{align*}}


\def \th {\theta}
\def \F {\mathcal F}

\def \R {\mathbb R}
\def \L {\Lambda}

\def \l {\lambda}
\def \DD {\Delta}
\def \A {\mathcal {A}}
\def \Ga {\Gamma}
\def \M {\mathcal{M}}

\begin{document}

\title{Simulating Stochastic Inertial Manifolds\\ by a Backward-Forward Approach
\footnote{This work was partly supported by the NSF Grants 1025422,
0923111 and 0731201, by the US DOE grant DESC0002097, and by the  NSFC grants 10971225 and 11028102.} }
 \author{Xingye Kan$^{1, 2}$,   Jinqiao Duan$^{1,2}$, Ioannis G. Kevrekidis$^3$ and Anthony J. Roberts$^4$\\
 \\
1. Department of Applied Mathematics\\ Illinois Institute of Technology \\
   Chicago, IL 60616, USA \\
   E-mail: xkan@hawk.iit.edu, \;\; duan@iit.edu   \\\\
2. Institute for Pure and Applied Mathematics, UCLA\\
Los Angeles, CA 90095, USA\\\\
3. Program in Applied and Computational Mathematics\\
$\&$\\
Department of Chemical and Biological Engineering\\
Princeton University\\
    Princeton, NJ 08544, USA  \\
    E-mail: yannis@princeton.edu  \\\\
4. School of Mathematical Sciences\\University of Adelaide\\Adelaide, Australia\\
    E-mail:  anthony.roberts@adelaide.edu.au  }

\date{June 18, 2012}

\maketitle

\begin{abstract}
\noindent A numerical approach for the approximation of inertial
manifolds of stochastic evolutionary equations with multiplicative
noise is presented and illustrated. After splitting the stochastic evolutionary
equations into a backward and a forward part, a numerical scheme is devised for
solving this backward-forward stochastic system, and an ensemble of graphs representing the inertial manifold is consequently obtained. This numerical approach is tested in two illustrative examples: one is for a system of stochastic ordinary differential equations and the other is for a stochastic partial differential equation.

\bigskip

\paragraph {Key words}
Invariant manifolds, inertial manifolds, random dynamical systems,
backward and forward stochastic differential equations, stochastic partial differential equations, numerical schemes
\end{abstract}

\section{Introduction}

The concept of inertial manifolds for deterministic partial
differential equations was  introduced in 1980s \cite{Constantin,
Foias}. These manifolds are finite dimensional invariant   manifolds
that attract every trajectory at an exponential rate. They play an
important role in the study of the long-time behavior of solutions,
since through them the dynamics of a large system can be described
by a finite dimensional system. To be more specific, the dimension
of the state space is reduced by projecting the system onto the
inertial manifold once the existence of the manifold has been
proved. For certain dissipative nonlinear systems, the inertial
manifold can be shown to exist and it exponentially attracts
solution orbits \cite{Foias}. In the case where the existence of the
inertial manifold is unknown, approximate inertial manifolds are
introduced \cite{Foias3, Titi, Titi2, Marion}. Several authors
considered the construction of approximate inertial
manifolds \cite{Titi2} as well as numerical simulations using these
manifolds \cite{Foias2, Graham, Jolly}.

Inertial manifolds have also been  considered for
  stochastic systems \cite{BenFlandoli, Chueshov, DaPrato, DuanLuSchm, DuanLuSchm2}. In one such study, Da Prato and Debussche \cite{DaPrato}
introduced the concept of a stochastic inertial manifold for an
abstract stochastic evolutionary equation in a Hilbert space $H$
(with scalar product $\langle\cdot, \cdot\rangle$ and the induced
distance $d(\cdot, \cdot)$)

\begin{align}\label{eq1}
\left\{
\begin{array}{rl}
&du + (Au + f(u))dt = g(u)dW_t,\\
&u(0) = u_0,
\end{array}
\right.
\end{align}
where $A$ is a   linear operator, $f$ and $g$ are two nonlinear
functions, and $W_t$ is a Wiener process taking values in another Hilbert space $U$.


Although a  theoretical framework for stochastic inertial manifolds
has been set up \cite{BenFlandoli, DuanLuSchm, DuanLuSchm2}, it is
desirable to  efficiently approximate  stochastic inertial manifolds
and perform simulations on them. Recently, Roberts introduced a
normal form transformation of stochastic differential systems when
the dynamics contains both slow modes and quickly decaying modes
\cite{Roberts},
in which algebraic techniques were used. This is intended for reduction of
finite dimensional systems and it might be used to test the
numerical techniques that are proposed for the infinite dimensional
case and also to inspire future development.

In this paper, we   introduce a numerical scheme for simulating the
stochastic inertial manifold of a stochastic evolutionary system
with multiplicative noise, which includes stochastic differential equations (\textsc{sde}s)
and stochastic partial differential equations (\textsc{spde}s). By projecting to
a countable basis, an \textsc{spde} could be converted to an infinite dimensional
system of \textsc{sde}s. The main idea is to solve a coupled backward-forward
system of \textsc{sde}s where
the backward part is finite dimensional, but the forward part is either infinite or high dimensional. The forward Picard type iteration scheme
\cite{Bender, Bender2, Douglas} is performed on the backward part, and an Euler discretization
scheme is applied to the forward part. The graph for the stochastic inertial manifold is consequently
obtained. Two examples, one for a system of \textsc{sde}s and one for an \textsc{spde},
are presented to illustrate our backward-forward approach.

The rest of this paper is organized as follows. Section 2 formulates
the problem and briefly reviews the analytical results
\cite{DaPrato}. Section 3 introduces the numerical scheme and
performs the error analysis. An approximation procedure is discussed
in Section 4. Finally two numerical examples are presented in
Section 5.

\section{Problem formulation}

We consider a  stochastic evolutionary system in a Hilbert space $H$ of the form

\begin{equation}\label{eq2}
du + Au\,dt = f(u)dt+g(u)dW_t,
\end{equation}
where the following conditions hold:

 \begin{enumerate}

\item  \emph{Linear part.} \\
$A$ is a self-adjoint operator in $H$ with
eigenvalues
\[
\l_1\leq\l_{2}\leq\cdots\leq\l_{k}\leq\cdots\leq\l_{j}\rightarrow+\infty.
\]

\item   \emph{Nonlinear part.}\\
$f: D(A^{\a})\mapsto H$ and $g: D(A^{\a})\mapsto L_2^0(D(A^\beta))$, for some $\a, \beta\in[0,1)$,
 are globally Lipschitz and bounded, with Lipschitz constants $L_f$ and~$L_g$ respectively,
i.e., for any $u,v\in D(A^\a)$,
\begin{align}\label{eq7}
\mid f(u)-f(v)\mid_H\leq L_f\mid u-v\mid_{D(A^\a)},\notag\\
\mid g(u)-g(v)\mid_H\leq L_g\mid u-v\mid_{D(A^\a)}.
\end{align}

When $f$ and $g$ are only locally Lipschitz, but the corresponding
deterministic system has a bounded absorbing set in an appropriate
state space, it is possible to cut-off $f$ and $g$ to zero outside a
ball containing the absorbing set. In this case the modified
(``cut-off") system has  globally Lipschitz drift and noise
intensity.

\item   \emph{Noise part.}\\
Although the Wiener process $W_t$ may take values in a Hilbert
space~$U$, to be specific, here we just consider $W_t$ to be a
two-sided one dimensional Wiener process, defined in a probability
space $(\Om,\F,\mathbb P)$, and adapted to a filtration $\F_t,
t\in\R$; note that $\F_t$ is  a two-parameter filtration
\cite{Arnold} or a one-parameter filtration starting at time
$-\infty$ instead of at time $0$ \cite{deSam}. More precisely, for
each $ t\in\R$,
\begin{align}\label{eqfiltration}
&\F_{-\infty}^t: = \vee_{s\leq t}\F_s^t,    \\
&\F_s^t := \sigma(W(u), s\leq u\leq t), \nonumber
\end{align}
which is the information generated by the Wiener process W on
the interval $(-\infty, t]$. We denote $\F_{-\infty}^t$ as $\F_t$
for simplicity here and henceforth.

The two-sided Wiener process $W_t$ is defined in terms of two
independent Wiener processes $\hat{W}_t$ and $\tilde{W}_t$ ($t\geq
0$), as follows,

\begin{equation*}
  W_t = \begin{cases}
         \hat{W}_t,   &\text{if $t \geq 0$,}\\
         \tilde{W}_{-t},    &\text{if $t<0$.}
    \end{cases}
\end{equation*}
The adaptedness means $W_t$ is measurable with respect to $\F_t$ for
each~ $t$. 

\end{enumerate}

\begin{remark}\label{rmk1}
The two-parameter filtration defined in \eqref{eqfiltration}
requiring $\F_s^t\subset\F_{s'}^{t'}$ for $s'\leq s\leq t\leq t'$ is
consistent with the well-known filtration for positive time. Indeed,
$\{\F_t\}_{t\geq0}$ is $\{\F_0^t\}_{t\geq0}$ in the two-parameter
setting. The only difference between one-parameter and two-parameter
filtrations in the above setting is their starting time. Since the
filtration specifies how the information is revealed in time, the
property that a filtration is increasing corresponds to the fact the
information is not forgotten. However, the generalization of
filtration from one-parameter to two-parameter, while maintaining
the property that a filtration is increasing, results in technical
difficulties in constructing invariant manifolds since a backward
\textsc{sde} is encountered. Overcoming this difficulty will be
discussed in detail in   Remark \ref{rmk3}.
\end{remark}

As   discussed in  \cite{Arnold, DuanLuSchm, DuanLuSchm2, Duan},  it
is appropriate and convenient to consider the canonical sample
space, by identifying sample paths of the Wiener process $W_t$ with
continuous curves (passing through the origin at $t=0$ since
$W_0=0$). Namely, a sample path is now a \emph{point}  in the space
$C(\R, \R)$ of continuous functions: $W_t(\om)= \om(t)$. Therefore
the sample space is taken to be
\[
\Om = \{\om\in C(\R;\R):\om(0)=0\}
\]
and $\mathbb P$ is taken to be the Wiener measure. This is analogous
to the situation of dice-tossing, where we take six face values, $1,
2, 3, 4, 5, $ and $6$, as   samples in the \emph{canonical} sample
space $\Om =\{1, 2, 3, 4, 5, 6 \}$.  When we ``toss" a Wiener
process $W_t$, we see continuous (but nowhere differentiable) curves
as ``face values" or samples.


  The Wiener shift $\theta_t$ is defined as a mapping in the canonical sample space $\Om$, for each fixed $t\in \R$,
\begin{align}
\theta_t:\ &\Om\rightarrow\Om   \nonumber  \\
&\om\mapsto\check{\om}\quad \text{such that}\quad
 \theta_t\om(s) = \check{\om}(s) \triangleq \om(t+s) - \om(t),\quad s,t\in\R.  \label{theta}
\end{align}

\begin{remark}\label{wienershift}
The Wiener shift defined in \eqref{theta} is a measure preserving
transformation, i.e.
\begin{align*}
\mathbb P(A) = \mathbb P(\theta_t^{-1}A), \forall A\in\F,
\end{align*}
\end{remark}
\noindent where $\mathbb P$ is the Wiener measure and $\mathbb P(A) = \mathbb
P(\theta_tA)$ is implied.\\

By a simple calculation, we see that $\theta_0=\operatorname{Id}$
(the identity mapping in $\Om$) and $\theta_{s+t}=\theta_s\circ
\theta_t$. Hence the Wiener shift is a deterministic dynamical
system (or a flow) in $\Om$. The above equation \eqref{theta} means
that
\begin{equation}\label{ddd}
   W_s(\theta_t \om) = W_{t+s}(\om)- W_t(\om) \thickapprox dW_t(\om).
\end{equation}
Thus $\theta_t$ is closely related to the noise in the stochastic
system \eqref{eq2} and is often called the \emph{driving flow}. The
solution mapping satisfies the property \cite{DaPrato}
\begin{equation}
   S(t, s; \om)x = S(t-s, 0; \theta_s\om)x, \; x\in H, \; t \geq s, \; \PX \;  a.s.
\end{equation}

\begin{definition} A stochastic inertial manifold for \eqref{eq2} is a random
family of manifolds $\{\mathcal {M}(\om)\}_{\om\in\Om}$ which is measurable with respect to
$\mathcal F_0$ and satisfies the following three
properties:
\begin{enumerate}
\item  Each realization of  $ \mathcal {M}(\om) $  is a deterministic manifold:  $\mathcal M(\om)$ is a Lipschitz (or smooth) manifold for
$\mathbb{P}-$almost all $\om\in\Om$.
\item Invariance:
\[
S(t,0,\om)\mathcal M(\om)=\mathcal M(\theta_t\om),\quad t\in\mathbb
R^+, \quad   \quad \mathbb P\text{ a.s.}
\]
\item Exponential attraction: for any $x\in H$
\[
\lim_{t\rightarrow\infty}d(S(t,0;\om)x,\mathcal M(\th_t\om))=0,\quad
\text{exponentially in }\ L^2(\Om),
\]
\end{enumerate}
where $u(t,s;\om)$ is the unique solution for \eqref{eq2} defined
for $t\in [s, +\infty)$ such that
\[
u(s,s;\om)=u_s
\]
and $S(t,s;\om)$ is the solution mapping
\[
u(t,s;\om) \triangleq S(t,s;\om)u_s(\om).
\]
\end{definition}

\begin{remark}
\begin{enumerate}
\item
$\mathcal M(\om)$ is a ``random variable" mapping all the samples
(continuous functions) in the canonical sample space $\Om$ into the
space of deterministic Lipschitz (or smooth) manifolds
\cite{Marsden}, and thus is measurable with respect to $\mathcal
F_0$.
Figure~\ref{stoimf} explains the meaning of random family of
manifolds $\{\mathcal M(\om)\}_{\om\in\Om}$ heuristically.

\item
The stochastic invariant property can be considered as a natural
generalization of invariance property in the deterministic setting.
In the deterministic case, $S(t,0)\mathcal M=\mathcal M,\quad
t\in\mathbb R^+$, which can be seen as $S(t,0)\mathcal
M(\om)=\mathcal M(\om)$ since there is one and only one sample in
such setting. In the stochastic setting, the invariance is in the
sense of probability. More precisely, under the system evolution or
solution mapping~$S$, if we start from somewhere on the manifold
$\mathcal M(\om)$, then after time $t$, we will stand on manifold
$\mathcal M(\theta_t\om)$ whose likelihood  of occurrence is the
same as $\mathcal M(\om)$. This is implied by the measure-preserving
property of Wiener shift $\theta_t$, i.e. $\mathbb
P(\{\om\})=\mathbb P(\{\theta_t\om\})$, see
Remark~\ref{wienershift}. From now on, we do not distinguish
``$\om$" and ``$\theta_t\om$" when we say ``a fixed sample".

\end{enumerate}

\begin{figure}[!htb]
\centerline{
\scalebox{0.5}{
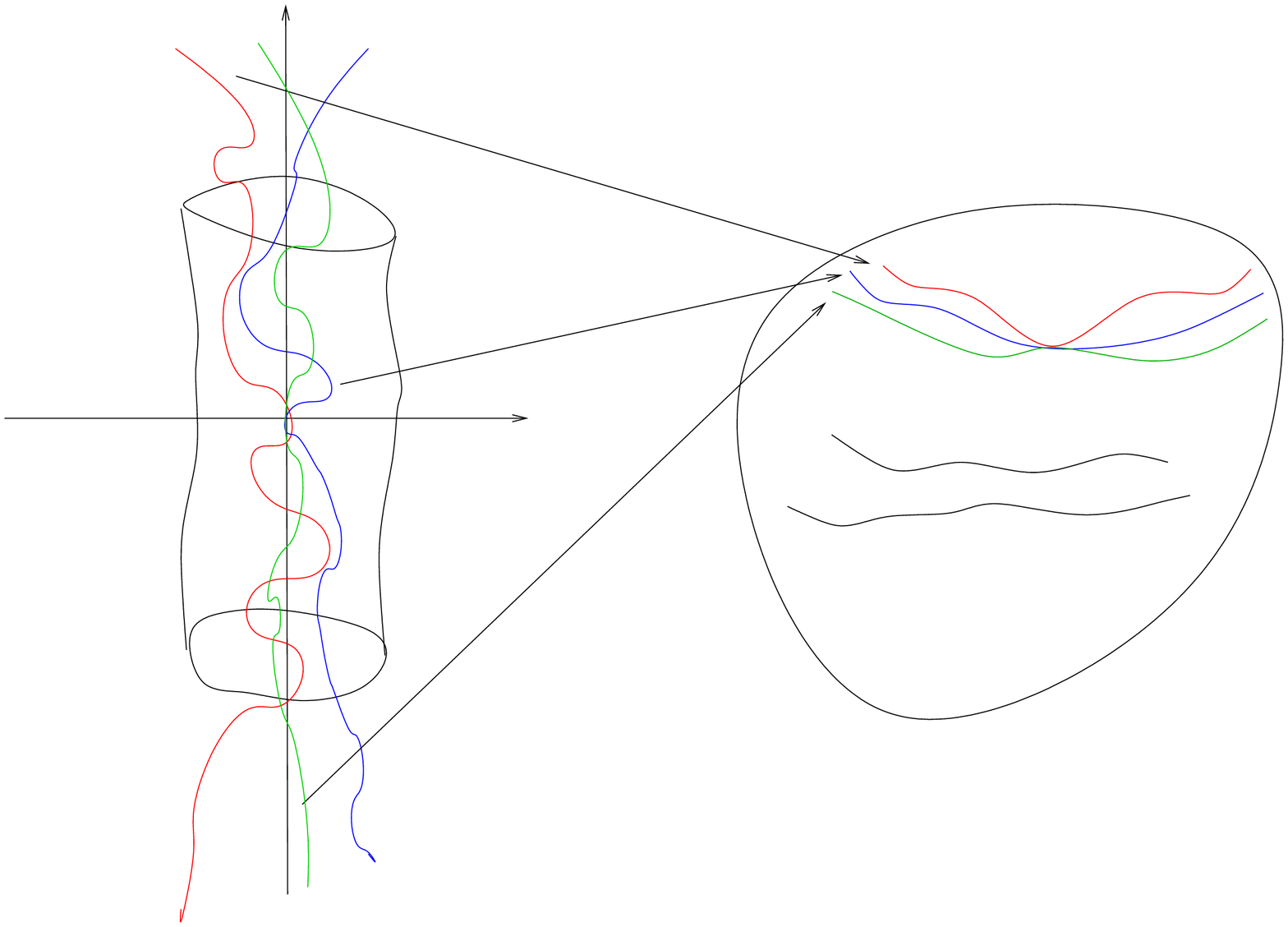}}
\caption{\small \sl Sketch of a stochastic inertial manifold: Three random samples ($\omega, \hat{\omega}, \tilde{\omega}$) on the left, and the corresponding three realizations ($\M(\omega), \M(\hat{\omega}), \M(\tilde{\omega})$) of the manifold on the right.
} \label{stoimf}
\end{figure}


\end{remark}

\bigskip

Recall that the   construction of an inertial manifold in the
deterministic case amounts to finding a graph above an eigenspace of
the linear operator $A$. By analogy, we take a projection $P$ to a
finite dimensional eigenspace $H^+$, and look for a function $\Phi$ from
$H^+=PH$ to $H^-=(I-P)H$ whose graph is invariant under the evolution of the
stochastic system \eqref{eq2}. The analytical foundation of our
numerical method is the combination of two well known methods
for constructing deterministic inertial manifolds. One is the
Lyapunov--Perron method, and the other one is the graph transform
method \cite{Constantin}. Roughly speaking, the Lyapunov--Perron method looks for
solutions of the original equation whose components on $(I-P)H$ are
bounded for negative time. The graph transform method is to let an inertial manifold (typically, the flat manifold $PH$) evolve under the system evolution and to
verify that the image of $PH$ at time $t$ is a graph which will
converge to an invariant manifold as $t\rightarrow\infty$.

As introduced by Da Prato and Debussche \cite{DaPrato},  we
reformulate \eqref{eq2} into a backward part and a forward part for
time $t \in [-T, 0]$:
\[  u:= X+Y, \]
and
\begin{align}\label{eq3333}
\left\{
\begin{array}{rl}
&dX + AX\,dt = Pf(X+Y)dt + Pg(X+Y)dW_t,\\
&X(0) = X_0,\\
&dY + AY\,dt = Qf(X+Y)dt + Qg(X+Y)dW_t,\\
&Y(-T) = 0,
\end{array}
\right.
\end{align}
where $P$ is a  projection from $H$ to the eigenspace spanned by the
first $k$ eigenvalues $\l_1,\cdots,\l_k$ of $A$, and $Q:=I-P$. Here
$k$ is determined by  the eigenvalues and Lipschitz constants for
the existence of stochastic inertial manifolds \cite{BenFlandoli,
DaPrato}. Hence $X$ is of finite dimension $k$ and $Y$ is of
infinite dimension.

   The problem \eqref{eq3333} involves a
backward stochastic evolutionary equation.

\begin{remark}\label{rmk3}
At first glance, we might want to solve the equations \eqref{eq3333}
whose unknown is the pair $(X,Y)$, in the interval $[-T,0]$.
However, this type of problem does not have solutions in general.
For the existence and uniqueness of solution of \textsc{sde}s, in
addition to the usual requirement as in the case of \textsc{ode}s, it is also
necessary for the solution to be adapted to the filtration generated
by the noise. Since the filtration is a collection of fields,~
$\mathbb{F}:=\{\ldots,\F_{-T},\F_{-t},\F_0,\F_{t},\F_{T},\ldots\}$,
$\F_t\subset\F_{t+1}$, if we use the usual backward integration
method, i.e., finding the solution at time $t$ by making use of the
solution at time $t+1$, then this $t-$solution is
$\F_{t+1}-$measurable but not necessarily $\F_t-$measurable, which
violates the definition of solution for \textsc{sde}s.


\end{remark}

In order to overcome this difficulty, the terminal value problem of
\textsc{sde} is reformulated in such a  way as to allow a
solution that is $\{\F_t\}_{t\leq0}-$adapted.

By Proposition 3.1 introduced by Da Prato and Debussche
\cite{DaPrato}, for every $X_0$ that is $\mathcal F_0-$measurable
and square integrable, there exists a unique triple $(X,Y,M_t)$ such
that
\begin{enumerate}
\item  $X:[-T,0]\mapsto PH $ is mean-square continuous and adapted,\\
\item $Y:[-T,0]\mapsto QD(A^\a)$ is mean-square continuous and adapted,\\
\item  $M_t$ is a square integrable martingale with values in $PH$,
\end{enumerate}
and $(X,Y,M_t)$ solves the following combined backward-forward stochastic system,  for time $t \in [-T, 0]$
\begin{align}\label{eq4444}
\left\{
\begin{array}{rl}
&dX + AX\,dt = Pf(X+Y)dt + dM_t,\\
&X(0) = X_0,\\
&dY + AY\,dt = Qf(X+Y)dt + Qg(X+Y)dW_t,\\
&Y(-T) = 0.
\end{array}
\right.
\end{align}
where
\[
M_t = X_t - X_0 - \int_t^0AX_sds + \int_{t}^0Pf(X_s + Y_s)ds.
\]

Note that the solution of this system \eqref{eq4444}   sits on the
interval $[-T, 0]$, the first $k$ components of $u$,  i.e. $X$,
travel backward from 0 to $-T$, and the remaining infinitely many
components of $u$,
i.e. $Y$, travel forward from $-T$ to 0. 

Figure \ref{schematicbf} is a schematic illustration of this
backward-forward method.

\begin{figure}[!htb]
\centerline{
\scalebox{0.8}{
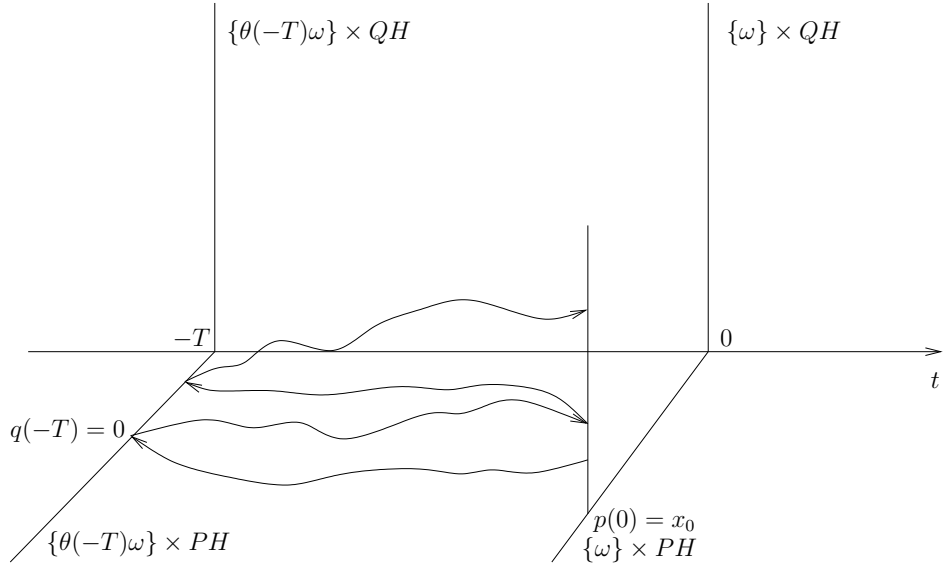}}
\caption{\small \sl Schematic illustration of the backward-forward
approach for approximating a stochastic inertial manifold.}
\label{schematicbf}
\end{figure}


For each fixed $T>0$, define a mapping $\Phi_T$  via
\begin{equation}\label{mfd222}
Y(0,\om):=\Phi_T(X_0(\om),\om),\ \om \in\Om.
\end{equation}
Da Prato and Debussche \cite{DaPrato} showed that the limit of
$(\Phi_T)_{T>0}$ as $T\rightarrow\infty$ in $L^2(\Om)$ is the
function $\Phi$ whose graph above $PH$ is the inertial manifold
\[
\mathcal M(\om)=\{u=X+Y:\;  Y=\Phi(X,\om), X\in PH \}.
\]
In the next section we discuss how to numerically simulate
\eqref{eq3333} and then in Section 4 we approximate the mapping
$\Phi$ and thus obtain an approximate inertial manifold $\mathcal
M$.

\section{A numerical scheme}

In this section, we devise a numerical scheme to compute the
solution of the backward-forward stochastic system \eqref{eq3333}.
Our scheme is inspired by \cite{Bender}. Some related references are
\cite{Bender2, Douglas}.

\subsection{Main idea}

Literally speaking, the backward-forward numerical iteration scheme for \eqref{eq4444}
works as follows: We start the first iteration by setting $Y$ to be
zero (flat manifold) for the entire time interval $[-T,0]$, and $X$ to be zero on
 $[-T,0)$ together with the terminal $X$ value to be any $\F_0$-measurable random variable. We
obtain the $X$ trajectory backward in time, by using future
information of $(X, Y)$ at the previous iteration, and then we generate the
$Y$ trajectory future in time using past in time information of $(X, Y)$
at the previous iteration.
The iteration is stopped when the distance of two consecutive $(X, Y)$
trajectories is less than a preset tolerance; we then use the
terminal $Y$ value at that iteration to approximate one point on the
manifold $\Phi$. 
We will illustrate the
method in more detail in Section~\ref{sec4}.

Before getting to the backward-forward approach, we need the
following preparatory work.

 Let $\{e_1, e_2, \ldots\}$ be an orthonormal basis for $H$. In
numerical analysis, we approximate the Hilbert space (could be
infinite dimensional) by  a $(k+l)-$dimensional subspace, and
project \eqref{eq2} into this subspace, i.e.
\begin{align}
du^{k+l} + A_{k+l}u^{k+l}\,dt = f_{k+l}(u^{k+l})dt+g_{k+l}(u^{k+l})dW.
\end{align}
In theory, $l$ could be infinite or a big natural number. The above projection is usually done by the Galerkin method.
We denote the numerical approximations of   $X,Y$ by $x,y$, respectively. Then
$PH=\R^k$ and $QH=\R^l$,
\[ u^{k+l} = x+y, \; \; x\in \R^k \; \mbox{ and } \; y \in \R^l, \]
and
\begin{align}\label{eq3}
\left\{
\begin{array}{rl}
&dx + Sx\,dt = \sum_{i=1}^k\langle e_i,f(X+Y)\rangle dt + dM_t, \\
&x(0) = x_0,\\
&dy + Uy\,dt = \sum_{i=k+1}^{k+l}\langle e_i,f(X+Y)\rangle dt + \sum_{i=k+1}^{k+l}\langle e_i,g(X+Y)\rangle dW_t,\\
&y(-T) = 0,
\end{array}
\right.
\end{align}
where $Sx := \sum_{i=1}^k\langle e_i,A_{k+l}u^{k+l}\rangle, Uy:=\sum_{i=k+1}^{k+l}\langle e_i,A_{k+l}u^{k+l}\rangle$.

For each fixed $T>0$, define a mapping $\Phi_T$  via
\begin{equation}\label{mfd}
y(0,\om):=\Phi_T(x_0(\om),\om),\ \om\in\Om.
\end{equation}
The limit of $(\Phi_T)_{T>0}$ as $T\rightarrow\infty$ in $L^2(\Om)$
is the function $\Phi$, and its graph above
$\R^k$ is the (approximate) inertial manifold $\mathcal M$
\[
\mathcal M(\om) = \{u=x+y: \; y=\Phi(x,\om), \;\; x\in \R^k\}.
\]
Therefore, our goal of constructing the stochastic inertial manifold $ \mathcal{M}$ is to numerically solve
\eqref{eq3} in order to obtain $\Phi_T$ for a large $T>0$ and for a number of sample $\om$'s.

For convenience, we denote
\begin{align*}
&\sum_{i=1}^k\langle e_i,f(X+Y)\rangle := f_1(x,y),\\
&\sum_{i=k+1}^{k+l}\langle e_i,f(X+Y)\rangle := f_2(x,y),\\
&\sum_{i=k+1}^{k+l}\langle e_i,g(X+Y)\rangle := g_2(x,y)
\end{align*}
 for the rest of this
paper. In the following, we also denote  $x(t)$ by $x_t$ and $y(t)$
by $y_t$.

\bigskip

In principle, the solution of \eqref{eq3} can be obtained as the
limit of a Picard type iteration $(x^{(n)},y^{(n)})$ as introduced
by Da Prato and Debussche \cite{DaPrato}. To be more precise,
$(x_t^{(0)},y_t^{(0)})\equiv(0,0)$, and $(x_t^{(n)},y_t^{(n)})$ is
the solution of the following iteration scheme
\begin{align}\label{eq4}
\left\{
\begin{array}{rl}
 x_t^{(n)} &= \EX\bigg(x_0 + \int_t^0Sx_s^{(n-1)}ds -
\int_t^0f_1(x_s^{(n-1)},y_s^{(n-1)})ds\,\vline\,\F_t\bigg),\\
y_t^{(n)} &= e^{-U(t+T)}y(-T)+\int_{-T}^te^{-U(t+T-s)}f_2(x_s^{(n-1)},y_s^{(n-1)})ds \\
&+ \int_{-T}^te^{-U(t+T-s)}g_2(x_s^{(n-1)},y_s^{(n-1)})dW_s,
\end{array}
\right.
\end{align}
where $t\in  [-T,0]$. The conditional expectation given $\F_t$ is introduced to guarantee that the solution is adapted, i.e., measurable  with respect to the filtration  $\F_t$, as is done in the theory of backward \textsc{sde}s \cite{Pardous, Peng}.

Our goal is to find $\Phi_T$ via looking for $y(0,\om)$ for given
$x_0$ since $y(0) := \Phi_T(x_0,\om)$. We now introduce a time
discretization of the above iteration. Note that for
the backward part $x$, the conditional expectation is still
involved; for the forward part $y$, we use the Euler-Maruyama scheme in the
discretization. In the numerical computation, every simulation
corresponds to one sample, however, we cannot specify which sample
we have actually chosen, but we do know that it is a validated
sample in the sample space $\Om$.

Suppose $h:=T/N,\quad t_i = -T+ih,\quad i=0, 1,\ldots, N$, taking
$W_t$ to be one dimensional Wiener process, and denoting $\DD
W_i:=W_{i+1}-W_i$, then a time discretization of \eqref{eq4} is

\begin{align}\label{eq8}
\left\{
\begin{array}{rl}
x_{t_i}^{(n)}&=\EX\bigg\{x_0+\sum_{j=i}^{N-1}\bigg[Sx_{t_j}^{(n-1)}
-f_1(x_{t_j}^{(n-1)},y_{t_j}^{(n-1)})\bigg]h\,\vline\,\F_{t_i}\bigg\},\\
y_{t_i}^{(n)}&=e^{-U(t_i+T)}y(-T)+\sum_{j=0}^{i-1}e^{-U(t_i-t_j)}f_2(x_{t_j}^{(n-1)},y_{t_j}^{(n-1)})h\\
&+\sum_{j=0}^{i-1}e^{-U(t_i-t_j)}g_2(x_{t_j}^{(n-1)},y_{t_j}^{(n-1)})\DD
W_j.
\end{array}
\right.
\end{align}
In Section~\ref{sec4}, we will explain how to implement this scheme.
Note that we could also use the  available $(x_{t_j}^{(n)}, y_{t_j}^{(n)})$, instead of $(x_{t_j}^{(n-1)}, y_{t_j}^{(n-1)})$, at the current iteration (for $j=1, \cdots, i-1$) to calculate $y_{t_i}^{(n)}$ in the above scheme~\eqref{eq8}.

We now show the convergence of the above time discretized Picard
iteration scheme \eqref{eq8}.

\subsection{Convergence of the numerical scheme}

We now prove the convergence of the numerical scheme \eqref{eq8} devised in the last subsection. Namely, we prove the convergence of $(x_t^{(n)}, y_t^{(n)})$ to $(x_t, y_t)$ in a certain sense (see Theorem~\ref{thm1} below). To this end, we estimate $x_t^{(n)}-x_t^{(\infty)}, y_t^{(n)}-y_t^{(\infty)}$ and $x_t^{(\infty)}-x_t, y_t^{(\infty)}-y_t$ in Lemma~\ref{L2} and Lemma~\ref{L3}, respectively.

Define

\begin{align*}
\left\{
\begin{array}{rl}
x_{t_N}^{(\infty)}&=x_0\\
y_{t_0}^{(\infty)}&=0\\
x_{t_i}^{(\infty)}&=\EX\{\bigg(x_0+\sum_{j=i}^{N-1}[Sx_{t_{j}}^{(\infty)}-f_1(x_{t_{j}}^{(\infty)},y_{t_{j}}^{(\infty)})]h\bigg)\mid\F_{t_i}\}\\
y_{t_i}^{(\infty)}&=e^{-Uh}[y_{t_{i-1}}^{(\infty)}+\int_{t_{i-1}}^{t_i}e^{U(s-t_{i-1})}f_2(x_{t_{i-1}}^{(\infty)},y_{t_{i-1}}^{(\infty)})ds+\int_{t_{i-1}}^{t_i}e^{U(s-t_{i-1})}g_2(x_{t_{i-1}}^{(\infty)},y_{t_{i-1}}^{(\infty)})dW_s].
\end{array}
\right.
\end{align*}
Recall
\begin{eqnarray*}
\left\{
\begin{array}{rl}
x_{t_i}^{(0)} &= 0, \quad0\leq i\leq N-1,\quad x_{t_N}=x_0 \\
y_{t_j}^{(0)} &= 0, \quad0\leq j\leq N \\
x_{t_i}^{(n+1)}&=\EX\{\bigg(x_0+\sum_{j=i}^{N-1}[Sx_{t_{j}}^{(n)}-f_1(x_{t_{j}}^{(n)},y_{t_{j}}^{(n)})]h\bigg)\mid\F_{t_i}\}\\
y_{t_i}^{(n+1)}&=e^{-Uh}[y_{t_{i-1}}^{(n+1)}+\int_{t_{i-1}}^{t_i}e^{U(s-t_{i-1})}f_2(x_{t_{i-1}}^{(n)},y_{t_{i-1}}^{(n)})ds+\int_{t_{i-1}}^{t_i}e^{U(s-t_{i-1})}g_2(x_{t_{i-1}}^{(n)},y_{t_{i-1}}^{(n)})dW_s].
\end{array}
\right.
\end{eqnarray*}

\begin{lemma}[Iteration error]\label{L2}
Let the Lipschitz condition \eqref{eq7} be satisfied. ~Assume that
\[
\mid f_1(0,0)\mid+\mid f_2(0,0)\mid+\mid
g_2(0,0)\mid+\mid x_0\mid\leq K,
\]
and
\[
M_1:=\max\{2L_f^2h,6e^{-2Uh}(L_f^2h+L_g^2)\}\quad with\quad
\sqrt{M_1T}<1.
\]
Then the following inequality holds:
For all $0\leq i\leq N$,
\begin{align*}
&\max_{0\leq i\leq N}\bigg[\EX\mid x_{t_i}^{(n)}-x_{t_i}^{(\infty)}\mid^2+\EX\mid y_{t_i}^{(n)}-y_{t_i}^{(\infty)}\mid^2\bigg]\\
&\leq C_2(\frac{1}{2}+C_1h)^n,
\end{align*}
where $C_1$ depends on $L_f, L_g, U, T$ and   $C_2$ depends on
$L_f, L_g, K, U, T$.

\end{lemma}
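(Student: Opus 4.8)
The plan is to control the single scalar quantity
\[
E^{(n)} := \max_{0\le i\le N}\Big[\EX|\delta x_{t_i}^{(n)}|^2 + \EX|\delta y_{t_i}^{(n)}|^2\Big],
\qquad
\delta x_{t_i}^{(n)} := x_{t_i}^{(n)}-x_{t_i}^{(\infty)},\ \ \delta y_{t_i}^{(n)} := y_{t_i}^{(n)}-y_{t_i}^{(\infty)},
\]
by proving a one-step contraction $E^{(n+1)}\le(\tfrac12+C_1h)E^{(n)}$ together with an a~priori bound $E^{(0)}\le C_2$; iterating the contraction and inserting the initialization then gives the claim, since $E^{(n)}\le(\tfrac12+C_1h)^nE^{(0)}\le C_2(\tfrac12+C_1h)^n$.

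First I would subtract the defining relations for the limiting iterate $(x^{(\infty)},y^{(\infty)})$ from those for $(x^{(n+1)},y^{(n+1)})$. For the backward component this yields
\[
\delta x_{t_i}^{(n+1)}=\EX\Big\{\sum_{j=i}^{N-1}\big[S\,\delta x_{t_j}^{(n)}-\big(f_1(x_{t_j}^{(n)},y_{t_j}^{(n)})-f_1(x_{t_j}^{(\infty)},y_{t_j}^{(\infty)})\big)\big]h\ \Big|\ \F_{t_i}\Big\}.
\]
Since conditional expectation is an $L^2$ contraction (Jensen), I can drop $\EX[\,\cdot\mid\F_{t_i}]$ at the price of an inequality — this is exactly where adaptedness is handled without cost — and then apply discrete Cauchy--Schwarz to the $N-i\le N=T/h$ summands together with the Lipschitz bound \eqref{eq7} on $f_1$ and the boundedness of the finite matrix $S$. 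The outcome is of the form $\EX|\delta x_{t_i}^{(n+1)}|^2\le 2L_f^2\,T\,h\sum_{j=i}^{N-1}\big(\EX|\delta x_{t_j}^{(n)}|^2+\EX|\delta y_{t_j}^{(n)}|^2\big)$, up to the analogous $S$ term.

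For the forward component the subtraction gives a one-step recursion in $i$,
\[
\delta y_{t_i}^{(n+1)}=e^{-Uh}\,\delta y_{t_{i-1}}^{(n+1)}+\int_{t_{i-1}}^{t_i}e^{-U(t_i-s)}\big(\Delta f_2\big)\,ds+\int_{t_{i-1}}^{t_i}e^{-U(t_i-s)}\big(\Delta g_2\big)\,dW_s,
\]
where $\Delta f_2,\Delta g_2$ are the increments of $f_2,g_2$ between the two iterates, evaluated (as in an Euler step) at $t_{i-1}$. Here I would use $|a+b+c|^2\le3(|a|^2+|b|^2+|c|^2)$, the It\^o isometry on the stochastic integral, Cauchy--Schwarz on the Lebesgue integral, and the Lipschitz bounds on $f_2,g_2$; the coefficient that appears is precisely the second entry of $M_1$, giving $\EX|\delta y_{t_i}^{(n+1)}|^2\le 3e^{-2Uh}\EX|\delta y_{t_{i-1}}^{(n+1)}|^2+M_1h\big(\EX|\delta x_{t_{i-1}}^{(n)}|^2+\EX|\delta y_{t_{i-1}}^{(n)}|^2\big)$. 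Because $y_{t_0}^{(n+1)}=y_{t_0}^{(\infty)}=0$, the recursion starts from zero, and solving it forward in $i$ by discrete variation of constants (summing the geometric weights $3e^{-2Uh}$) bounds $\EX|\delta y_{t_i}^{(n+1)}|^2$ by $M_1h$ times a sum of sweep-$n$ errors.

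Finally I would insert both estimates into $E^{(n+1)}$. The delicate bookkeeping — and what I expect to be the main obstacle — is that the $x$ estimate carries the backward horizon length $T$ rather than $h$, so its raw coefficient has size $\sim L_f^2T^2=M_1T\cdot N$, not a contraction by itself; the smallness hypothesis $\sqrt{M_1T}<1$ is exactly what absorbs this horizon and pins the leading per-sweep constant at $\tfrac12$, relegating all genuinely $O(h)$ discretization remainders to $C_1h$. Reconciling the two opposite time-directions is the crux: the $x$ error at $t_i$ feeds on $\{t_j\}_{j\ge i}$ while the $y$ error feeds on $\{t_j\}_{j<i}$, both at sweep $n$, so the combination must be organized in the common max-norm $E^{(n)}$ rather than time-slice by time-slice. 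A secondary technical step is the initialization $E^{(0)}=\max_i[\EX|x_{t_i}^{(\infty)}|^2+\EX|y_{t_i}^{(\infty)}|^2]\le C_2$ (since $x^{(0)}\equiv0$, $y^{(0)}\equiv0$ off the terminal node), which is an a~priori bound on the limiting iterate obtained from $|f_1(0,0)|+|f_2(0,0)|+|g_2(0,0)|+|x_0|\le K$ by the same Jensen/It\^o machinery; this is where the dependence of $C_2$ on $K$ enters. Induction on $n$ then closes the argument.
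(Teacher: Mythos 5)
Your high-level architecture differs from the paper's: you contract the error to the limiting iterate directly, $E^{(n+1)}\le(\tfrac12+C_1h)E^{(n)}$, whereas the paper bounds \emph{consecutive} differences $\EX|x^{(n+1)}_{t_i}-x^{(n)}_{t_i}|^2+\EX|y^{(n+1)}_{t_i}-y^{(n)}_{t_i}|^2$, shows they decay geometrically with ratio $\widetilde M_1T$, and then controls $x^{(n)}-x^{(\infty)}$ by the tail of the geometric series (which is also how the paper gets existence of $(x^{(\infty)},y^{(\infty)})$ as a Cauchy limit --- a point your route simply presupposes). That structural difference would be acceptable; the genuine gap lies in your backward estimate. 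Applying Jensen and then Cauchy--Schwarz across the whole sum $\sum_{j=i}^{N-1}(\cdot)\,h$ gives, as you state, $\EX|\delta x^{(n+1)}_{t_i}|^2\le 2L_f^2Th\sum_{j=i}^{N-1}\big(\EX|\delta x^{(n)}_{t_j}|^2+\EX|\delta y^{(n)}_{t_j}|^2\big)\le 2L_f^2T^2E^{(n)}$, i.e.\ a per-sweep coefficient of order $L_f^2T^2\sim M_1TN$. You recognize this, but your claim that the hypothesis $\sqrt{M_1T}<1$ ``absorbs this horizon and pins the leading per-sweep constant at $\tfrac12$'' is false: the hypothesis bounds $M_1T$, not $M_1TN$. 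Since $M_1\to 6L_g^2>0$ as $h\to 0$, one has $M_1TN=M_1T^2/h\to\infty$, so your coefficient blows up exactly in the limit $h\to 0$ where the lemma is supposed to apply. The central contraction inequality of your proof is therefore unproved, and no reorganization of the bookkeeping in the max-norm $E^{(n)}$ can recover it from the estimates you actually derive.

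The missing idea is the paper's per-step treatment of the backward sum. Instead of estimating the full sum at once, the paper uses the one-step recursion $x^{(n+1)}_{t_i}=\EX\{x^{(n+1)}_{t_{i+1}}+[Sx^{(n)}_{t_i}-f_1(x^{(n)}_{t_i},y^{(n)}_{t_i})]h\mid\F_{t_i}\}$ and Young's inequality with a parameter $\Gamma$ (ultimately $\Gamma=1/h$), so that each Lipschitz term enters with weight $(h+\Gamma^{-1})h=2h^2$ rather than your $Th$; telescoping from $i$ to $N$ (where the difference vanishes, since $x^{(n+1)}_{t_N}=x^{(n)}_{t_N}=x_0$) then yields the coefficient $L_f^2(h+\Gamma^{-1})\sum_j h\le 2L_f^2hT\le M_1T$, which is what the hypothesis makes small --- this is the paper's inequality (11), and it is smaller than your coefficient by precisely the factor $N$ you lose. (The analogous per-step recursion is also what produces the $y$-coefficient $6e^{-2Uh}(L_f^2h+L_g^2)T\le M_1T$.) To repair your argument you would have to replace the blunt Cauchy--Schwarz step by this telescoping estimate; once the one-sweep coefficient is $M_1T=\tfrac12+C_1h$ rather than $M_1TN$, your direct-to-limit contraction (together with an a priori bound supplying $C_2$, which the paper obtains for the first iterate via Lemma~8 of Bender--Denk) does close, and is then essentially equivalent to the paper's proof.
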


\begin{proof}
First note that
\begin{align*}
x_{t_i}^{(n+1)}&=\EX\{x_{t_{i+1}}^{(n+1)}+[Sx_{t_{i}}^{(n)}-f_1(x_{t_{i}}^{(n)},y_{t_{i}}^{(n)})]h\mid\F_{t_i}\},\\
x_{t_i}^{(n)}&=\EX\{x_{t_{i+1}}^{(n)}+[Sx_{t_{i}}^{(n-1)}-f_1(x_{t_{i}}^{(n-1)},y_{t_{i}}^{(n-1)})]h\mid\F_{t_i}\}.\\
\end{align*}
We now estimate
\begin{align*}
&\EX[\mid x_{t_i}^{(n+1)}-x_{t_i}^{(n)}\mid^2]\\
&=\EX\bigg(\mid\EX\bigg\{x_{t_{i+1}}^{(n+1)}-x_{t_{i+1}}^{(n)}+\big[Sx_{t_{i}}^{(n)}-Sx_{t_{i}}^{(n-1)}
-(f_1(x_{t_{i}}^{(n)},y_{t_{i}}^{(n)})-f_1(x_{t_{i}}^{(n-1)},y_{t_{i}}^{(n-1)}))\big]h\mid\F_{t_i}\bigg\}\mid^2\bigg)\\
&\leq\EX\bigg(\EX\bigg\{\mid
x_{t_{i+1}}^{(n+1)}-x_{t_{i+1}}^{(n)}+\big[Sx_{t_{i}}^{(n)}-Sx_{t_{i}}^{(n-1)}
-(f_1(x_{t_{i}}^{(n)},y_{t_{i}}^{(n)})-f_1(x_{t_{i}}^{(n-1)},y_{t_{i}}^{(n-1)}))\big]h\mid^2\mid\F_{t_i}\bigg\}\bigg)\\
\quad&\text{(Jensen's inequality)}\\
&=\EX\bigg\{\mid
x_{t_{i+1}}^{(n+1)}-x_{t_{i+1}}^{(n)}+\big[Sx_{t_{i}}^{(n)}-Sx_{t_{i}}^{(n-1)}
-(f_1(x_{t_{i}}^{(n)},y_{t_{i}}^{(n)})-f_1(x_{t_{i}}^{(n-1)},y_{t_{i}}^{(n-1)}))\big]h\mid^2\bigg\}\\
&=\EX\bigg\{\mid
x_{t_{i+1}}^{(n+1)}-x_{t_{i+1}}^{(n)}\mid^2+\mid\big[Sx_{t_{i}}^{(n)}-Sx_{t_{i}}^{(n-1)}
-(f_1(x_{t_{i}}^{(n)},y_{t_{i}}^{(n)})-f_1(x_{t_{i}}^{(n-1)},y_{t_{i}}^{(n-1)}))\big]h\mid^2\\
&+2\mid(x_{t_{i+1}}^{(n+1)}-x_{t_{i+1}}^{(n)})\sqrt
h\mid\mid\big[Sx_{t_{i}}^{(n)}-Sx_{t_{i}}^{(n-1)}
-(f_1(x_{t_{i}}^{(n)},y_{t_{i}}^{(n)})-f_1(x_{t_{i}}^{(n-1)},y_{t_{i}}^{(n-1)}))\big]\sqrt h\mid\bigg\}\\
&\leq\EX\bigg\{\mid
x_{t_{i+1}}^{(n+1)}-x_{t_{i+1}}^{(n)}\mid^2+\mid\big[Sx_{t_{i}}^{(n)}-Sx_{t_{i}}^{(n-1)}
-(f_1(x_{t_{i}}^{(n)},y_{t_{i}}^{(n)})-f_1(x_{t_{i}}^{(n-1)},y_{t_{i}}^{(n-1)}))\big]h\mid^2\bigg\}\\
&+2\sqrt{\EX[\mid
x_{t_{i+1}}^{(n+1)}-x_{t_{i+1}}^{(n)}\mid^2h]}\sqrt{\EX\mid\big[Sx_{t_{i}}^{(n)}-Sx_{t_{i}}^{(n-1)}
-(f_1(x_{t_{i}}^{(n)},y_{t_{i}}^{(n)})-f_1(x_{t_{i}}^{(n-1)},y_{t_{i}}^{(n-1)}))\big]^2h\mid}\\
&\leq\EX\bigg\{\mid
x_{t_{i+1}}^{(n+1)}-x_{t_{i+1}}^{(n)}\mid^2+\mid\big[Sx_{t_{i}}^{(n)}-Sx_{t_{i}}^{(n-1)}
-(f_1(x_{t_{i}}^{(n)},y_{t_{i}}^{(n)})-f_1(x_{t_{i}}^{(n-1)},y_{t_{i}}^{(n-1)}))\big]h\mid^2\bigg\}\\
&+\Ga h\EX\mid
x_{t_{i+1}}^{(n+1)}-x_{t_{i+1}}^{(n)}\mid^2+\Ga^{-1}h\EX\mid
Sx_{t_{i}}^{(n)}-Sx_{t_{i}}^{(n-1)}
-(f_1(x_{t_{i}}^{(n)},y_{t_{i}}^{(n)})-f_1(x_{t_{i}}^{(n)},y_{t_{i}}^{(n-1)}))\mid^2\\
\quad&(\text{Young's inequality } ab\leq \frac{a^2}{2\Ga}+\frac{b^2\Ga}{2}\quad \text {where $\Ga>0$ will be choosen later})\\
&\leq(1+\Ga h)\bigg(\EX\mid
x_{t_{i+1}}^{(n+1)}-x_{t_{i+1}}^{(n)}\mid^2\\
&+\frac{h+\Ga^{-1}}{1+\Ga h}L_f^2\EX\mid
y_{t_{i}}^{(n)}-y_{t_{i}}^{(n-1)}\mid^2h+\frac{h+\Ga^{-1}}{1+\Ga
h}L_f^2\EX\mid x_{t_{i}}^{(n)}-x_{t_{i}}^{(n-1)}\mid^2h\bigg).
\end{align*}
Since $x_{t_N}^{(n+1)} = x_{t_N}^{(n)} = x_0 $, by iterating the last inequality, we obtain

\begin{align}\label{eq11}
&\EX[\mid x_{t_i}^{(n+1)}-x_{t_i}^{(n)}\mid^2]\nonumber\\
&\leq L_f^2(h+\Ga^{-1})\big[\sum_{j=i}^{N-1}\EX\mid
y_{t_j}^{(n)}-y_{t_j}^{(n-1)}\mid^2h+\sum_{j=i}^{N-1}\EX\mid
x_{t_j}^{(n)}-x_{t_j}^{(n-1)}\mid^2h\big].
\end{align}

For the forward \textsc{sde} part, since in the numerical scheme $y$ is only of finite dimension, we consider $y$ to be one-dimensional for simplicity, and estimate

\begin{align}\label{eq12}
&\EX\mid y_{t_i}^{(n+1)}-y_{t_i}^{(n)}\mid^2\nonumber\\
=\EX\mid &e^{-Uh}\bigg[(y_{t_{i-1}}^{(n+1)}-y_{t_{i-1}}^{(n)})+\int_{t_{i-1}}^{t_i}e^{U(s-t_{i-1})}(f_2(x_{t_{i-1}}^{(n)},y_{t_{i-1}}^{(n)})-f_2(x_{t_{i-1}}^{(n-1)},y_{t_{i-1}}^{(n-1)}))ds\nonumber\\
&+\int_{t_{i-1}}^{t_i}e^{U(s-t_{i-1})}(g_2(x_{t_{i-1}}^{(n)},y_{t_{i-1}}^{(n)})-g_2(x_{t_{i-1}}^{(n-1)},y_{t_{i-1}}^{(n-1)}))dW_s\bigg]\mid^2\nonumber\\
\leq 3e^{-2Uh}\bigg(&\EX\mid y_{t_{i-1}}^{(n+1)}-y_{t_{i-1}}^{(n)}\mid^2+\EX\mid\int_{t_{i-1}}^{t_i}e^{U(s-t_{i-1})}(f_2(x_{t_{i-1}}^{(n)},y_{t_{i-1}}^{(n)})-f_2(x_{t_{i-1}}^{(n-1)},y_{t_{i-1}}^{(n-1)}))ds\mid^2\nonumber\\
&+\EX\mid\int_{t_{i-1}}^{t_i} e^{U(s-t_{i-1})}(g_2(x_{t_{i-1}}^{(n)},y_{t_{i-1}}^{(n)})-g_2(x_{t_{i-1}}^{(n-1)},y_{t_{i-1}}^{(n-1)}))dW_s\mid^2\bigg)\nonumber\\
\leq 3e^{-2Uh}\bigg(&\EX\mid y_{t_{i-1}}^{(n+1)}-y_{t_{i-1}}^{(n)}\mid^2+h^2\EX L_f^2(\mid x_{t_{i-1}}^{(n)}-x_{t_{i-1}}^{(n-1)}\mid+\mid y_{t_{i-1}}^{(n)}-y_{t_{i-1}}^{(n-1)}\mid)^2\nonumber\\
&+h\EX L_g^2(\mid x_{t_{i-1}}^{(n)}-x_{t_{i-1}}^{(n-1)}\mid+\mid y_{t_{i-1}}^{(n)}-y_{t_{i-1}}^{(n-1)}\mid)^2\bigg)\nonumber\\
\leq 3e^{-2Uh}\bigg(&\EX\mid y_{t_{i-1}}^{(n+1)}-y_{t_{i-1}}^{(n)}\mid^2+(2h^2L_f^2+2hL_g^2)\EX\mid x_{t_{i-1}}^{(n)}-x_{t_{i-1}}^{(n-1)}\mid^2\nonumber\\
&+(2h^2L_f^2+2hL_g^2)\EX\mid y_{t_{i-1}}^{(n)}-y_{t_{i-1}}^{(n-1)}\mid^2\bigg)\nonumber\\
\leq 6e^{-2Uh}(&hL_f^2+L_g^2)\bigg(\sum_{j=1}^{i-1}\EX\mid
x_{t_j}^{(n)}-x_{t_j}^{(n-1)}\mid^2h+\sum_{j=1}^{i-1}\EX\mid
y_{t_j}^{(n)}-y_{t_j}^{(n-1)}\mid^2h\bigg),
\end{align}
where the last inequality is by iteration.

Combining \eqref{eq11} and \eqref{eq12}, and letting
$\widetilde {M_1}:=\max\{L_f^2(h+\Ga^{-1}),6e^{-2Uh}(L_f^2h+L_g^2)\}$, we have
\begin{align*}
&\max_{0\leq i\leq N}\bigg[\EX\mid x_{t_i}^{(n+1)}-x_{t_i}^{(n)}\mid^2+\EX\mid y_{t_i}^{(n+1)}-y_{t_i}^{(n)}\mid^2\bigg]\\
&\leq \widetilde {M_1}T\max_{0\leq i\leq N}\bigg[\EX\mid x_{t_i}^{(n)}-x_{t_i}^{(n-1)}\mid^2+\EX\mid y_{t_i}^{(n)}-y_{t_i}^{(n-1)}\mid^2\bigg]\\
&\leq (\widetilde {M_1}T)^n\max_{0\leq i\leq N}\bigg[\EX\mid x_{t_i}^{(1)}-x_{t_i}^{(0)}\mid^2+\EX\mid y_{t_i}^{(1)}-y_{t_i}^{(0)}\mid^2\bigg]\\
&=(\widetilde{M_1}T)^n\max_{0\leq i\leq N}\bigg[\EX\mid x_{t_i}^{(1)}\mid^2+\EX\mid y_{t_i}^{(1)}\mid^2\bigg].
\end{align*}

Define $\bigg((\widetilde{M_1}T)^n\max_{0\leq i\leq N}\bigg[\EX\mid
x_{t_i}^{(1)}\mid^2+\EX\mid
y_{t_i}^{(1)}\mid^2\bigg]\bigg)^{\frac{1}{2}}$ by $a_n$.  Then the
series $\sum_na_n$ converges when $\sqrt{\widetilde M_1T}<1$. Therefore, for
all $0\leq i\leq N$, $\{(x_{t_i}^{(n)}, y_{t_i}^{(n)})\}_n$ is a
Cauchy sequence and thus converges to
$(x_{t_i}^{(\infty)},y_{t_i}^{(\infty)})$ in the mean square sense.
To be more precise,
\begin{align*}
&\max_{0\leq i\leq N}\bigg[\EX\mid x_{t_i}^{(\infty)}-x_{t_i}^{(n)}\mid^2+\EX\mid y_{t_i}^{(\infty)}-y_{t_i}^{(n)}\mid^2\bigg]\\
&\leq \bigg(\sum_{\nu=n}^{\infty}a_\nu\bigg)^2\\
&=\frac{(\widetilde M_1T)^n\max_{0\leq i\leq N}\bigg[\EX\mid
x_{t_i}^{(1)}\mid^2+\EX\mid
y_{t_i}^{(1)}\mid^2\bigg]}{(1-\sqrt{\widetilde M_1T})^2}.
\end{align*}

Choosing $\Ga:=1/h$, $\widetilde M_1 = M_1$ as in the assumption, thus $\sqrt{M_1T}<1$. By taking
$M_1T:=\frac{1}{2}+C_1h$ where $C_1$ depends on $L_f, L_g, U, T$ and
$C_1h<\frac{1}{2}$, since $\max_{0\leq i\leq N}\EX\mid x_{t_i}^{(1)}\mid^2+\EX\mid
y_{t_i}^{(1)}\mid^2\leq C_3$ as shown similarly as Lemma 8 in Bender and Denk~
\cite{Bender}   and $C_3$ only depends on $L_f, L_g, K$, we have
\begin{align*}
&\max_{0\leq i\leq N}\bigg[\EX\mid x_{t_i}^{(\infty)}-x_{t_i}^{(n)}\mid^2+\EX\mid y_{t_i}^{(\infty)}-y_{t_i}^{(n)}\mid^2\bigg]\\
&\leq C_2(\frac{1}{2}+C_1h)^n,
\end{align*}
where $C_2$ depends on $L_f, L_g, K, U, T$. This proves the lemma.

\end{proof}

In the following, $C$ will denote a generic positive constant, independent of $i$ and $n$, that
may take different values from line to line.


\begin{lemma}[Discretization error]\label{L3}
Assume all the conditions as in Lemma \ref{L2} are satisfied,
then
\begin{align*}
\sup_{-T\leq t\leq0}\bigg(\EX\mid x_t-x_t^{(\infty)}\mid^2+\EX\mid y_t-y_t^{(\infty)}\mid^2\bigg)<Ch.
\end{align*}
\end{lemma}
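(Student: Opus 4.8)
The plan is to split the error $(x_t-x_t^{(\infty)},\,y_t-y_t^{(\infty)})$ into a genuine discretization residual and a Lipschitz coupling term, to bound the residual by $O(h)$ in mean square, and then to close the resulting coupled forward--backward recursion using the smallness hypothesis $\sqrt{M_1T}<1$ inherited from Lemma~\ref{L2}. Since $(x^{(\infty)},y^{(\infty)})$ is defined only on the grid, I would read it off the grid by its endpoint value on each subinterval, so that the off-grid contribution to $\sup_{-T\le t\le0}$ is controlled by the modulus of continuity of the exact solution. The first ingredient is the mean-square temporal regularity of the exact solution of \eqref{eq4}: for $-T\le s\le t\le 0$,
\[
\EX|x_t-x_s|^2 + \EX|y_t-y_s|^2 \le C|t-s|.
\]
For the forward component this is the usual It\^o-isometry estimate for the stochastic convolution, using that $f_2,g_2$ are Lipschitz and bounded and that $U$ is a finite Galerkin block of $A$, hence bounded; for the backward component it follows from the martingale-representation term $M_t$ together with the boundedness of $f_1$ and $S$. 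This H\"older-$\tfrac12$ estimate is what converts each one-step freezing error into an $O(h)$ contribution.

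For the forward part I would subtract the defining relations for $y_{t_i}$ and $y_{t_i}^{(\infty)}$ one step at a time, reproducing the algebra of \eqref{eq12} but now with a true remainder in place of consecutive iterates. Each coefficient difference splits as ``exact minus frozen-exact'' plus ``frozen-exact minus frozen-discrete''. The first piece is a pure quadrature/convolution residual: by the regularity estimate its drift part is $O(h^3)$ per step and its $g_2$-stochastic part is $O(h^2)$ per step, and both accumulate --- the stochastic one through the orthogonality of martingale increments over disjoint intervals --- to $O(h)$ in squared $L^2$. The second piece is the Lipschitz coupling $\le(2h^2L_f^2+2hL_g^2)\bigl(\EX|x_{t_{i-1}}-x_{t_{i-1}}^{(\infty)}|^2+\EX|y_{t_{i-1}}-y_{t_{i-1}}^{(\infty)}|^2\bigr)$, carried forward by the recursion exactly as in \eqref{eq12}.

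For the backward part I would write $x_{t_i}=\EX\{x_0+\int_{t_i}^0[Sx_s-f_1(x_s,y_s)]\,ds\mid\F_{t_i}\}$ and subtract the discrete relation for $x_{t_i}^{(\infty)}$; conditional Jensen removes the $\F_{t_i}$-expectation at the cost of an inequality. What remains is the quadrature remainder $\int_{t_i}^0\psi_s\,ds-\sum_{j\ge i}\psi_{t_j}h$ with $\psi_s:=Sx_s-f_1(x_s,y_s)$, plus Lipschitz coupling terms of the same shape as in \eqref{eq11}. By Cauchy--Schwarz over the $\le N$ subintervals and the regularity estimate, $\EX|\int_{t_j}^{t_{j+1}}(\psi_s-\psi_{t_j})\,ds|^2=O(h^3)$, so the total remainder is $O(N^2h^3)=O(h)$ in squared $L^2$. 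Setting $E_i:=\EX|x_{t_i}-x_{t_i}^{(\infty)}|^2+\EX|y_{t_i}-y_{t_i}^{(\infty)}|^2$ and adding the forward and backward recursions as in the proof of Lemma~\ref{L2} (with $\Ga=1/h$, so the coupling constant is $M_1$) would yield $\max_i E_i\le M_1T\max_i E_i+Ch$. Since $M_1T<1$ the coupling term is absorbed and $\max_iE_i\le Ch/(1-M_1T)=O(h)$; a final use of the regularity estimate removes the restriction to grid points and gives the stated supremum bound.

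The main obstacle is the forward--backward coupling. Because the backward equation expresses $x_{t_i}$ through \emph{future} data --- a conditional expectation of an integral over $[t_i,0]$ --- while the forward equation propagates $y$ from the \emph{past}, the error does not obey a one-directional Gronwall inequality; the two recursions must be closed simultaneously in the combined $\max_i$-norm, and it is precisely the smallness condition $\sqrt{M_1T}<1$ that makes this combined norm contractive enough to absorb the coupling. A secondary technical point is the H\"older-$\tfrac12$ regularity of the backward component $x_t$, which must be extracted from the martingale part $M_t$ rather than from an explicit stochastic convolution.
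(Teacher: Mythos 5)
Your plan shares the paper's two basic ingredients (mean-square H\"older-$\tfrac12$ regularity of the exact solution, and an ``exact minus frozen'' splitting of each coefficient into an $O(h)$ quadrature residual plus a Lipschitz coupling term), but your closure mechanism is genuinely different from the paper's, and that is where the proof breaks. The paper never forms a simultaneous contraction in $\max_i E_i$: it uses the Martingale Representation Theorem to write an \textsc{sde} for $x^{(\infty)}_t$ between grid points, applies It\^o's formula to $|x_t-x^{(\infty)}_t|^2$, estimates the cross term by Young's inequality with a large but \emph{$h$-independent} parameter $\alpha\gg C$, and closes the \emph{backward part alone} by a backward discrete Gronwall iteration, $(1+Ch)^N\le C$, started from the terminal identity $\delta x_{t_N}=0$; the forward-error feedback into this recursion is suppressed (the paper asserts $y^{(\infty)}_{t_j}=y_{t_j}$ at grid points, so that input is treated as $O(h)$), and only afterwards is the forward error bounded in one step using the already-established backward estimate. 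No smallness of the coupling is invoked anywhere in the paper's Lemma~\ref{L3}.

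The concrete gap in your argument is the quantitative claim that adding the two recursions ``with $\Gamma=1/h$, so the coupling constant is $M_1$'' yields $\max_iE_i\le M_1T\max_iE_i+Ch$. Write $\psi_s:=Sx_s-f_1(x_s,y_s)$ and $\psi^{(\infty)}_{t_j}:=Sx^{(\infty)}_{t_j}-f_1(x^{(\infty)}_{t_j},y^{(\infty)}_{t_j})$. Your backward coupling term is $K_i=\sum_{j\ge i}\bigl(\psi_{t_j}-\psi^{(\infty)}_{t_j}\bigr)h$, whose summands are adapted but are \emph{not} martingale increments, so there is no It\^o-isometry gain and the only available estimate is Cauchy--Schwarz:
\begin{equation*}
\EX|K_i|^2\;\le\;T\sum_{j\ge i}h\,\EX\bigl|\psi_{t_j}-\psi^{(\infty)}_{t_j}\bigr|^2\;\le\;C\,L_f^2\,T^2\max_jE_j ,
\end{equation*}
a constant carrying \emph{no factor of $h$}. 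By contrast, the backward branch of $M_1$ is $2L_f^2h$, which vanishes as $h\to0$ (that factor of $h$ arises in Lemma~\ref{L2} from the one-step Young inequality leading to \eqref{eq11}; it cannot be reproduced for the summed remainder $K_i$). Consequently, as $h\to0$ the standing hypothesis $\sqrt{M_1T}<1$ degenerates to roughly $6L_g^2T<1$ and places no constraint at all on $L_f^2T^2$; what you can actually prove is $\max_iE_i\le C(L_f,L_g,U)\,T^2\max_iE_i+Ch$, and the absorption step fails unless you add a smallness condition of the type $L_fT$ small, which Lemma~\ref{L2}'s hypothesis does not supply. (Your forward coupling, estimated as in \eqref{eq12}, does reproduce the forward branch $6e^{-2Uh}(L_f^2h+L_g^2)T$ of $M_1T$; it is specifically the backward sum that is not controlled.) To prove the lemma under the stated hypothesis you must decouple the recursions as the paper does --- close the backward part by It\^o's formula plus a backward Gronwall argument from $\delta x_{t_N}=0$, with the forward-error input entering only as an $O(h)$ source term, and bound the forward error last --- rather than absorb an $h$-independent coupling constant that the assumptions do not make small.
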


\begin{proof}
Note that
\begin{align*}
x_{t_{i-1}}^{(\infty)} = \EX[x_{t_{i}}^{(\infty)}\mid\F_{t_{i-1}}]
+[Sx_{t_{i-1}}^{(\infty)}-f_1(x_{t_{i-1}}^{(\infty)},y_{t_{i-1}}^{(\infty)})](t_i-t_{i-1}).
\end{align*}
For $t_{i-1}\leq t\leq t_i$, we have
\begin{align*}
x_{t_{i-1}}^{(\infty)} = \EX[x_{t}^{(\infty)}\mid\F_{t_{i-1}}]
+[Sx_{t_{i-1}}^{(\infty)}-f_1(x_{t_{i-1}}^{(\infty)},y_{t_{i-1}}^{(\infty)})](t-t_{i-1}).
\end{align*}
By the Martingale Representation Theorem \cite{Klebaner},
\[
x_t^{(\infty)} = \EX[x_t^{(\infty)}\mid\F_{t_{i-1}}]+\int_{t_{i-1}}^{t}Z_s^{(\infty)}dW_s,
\]
we have
\begin{align*}
x_{t}^{(\infty)} = x_{t_{i-1}}^{(\infty)}-(t-t_{i-1})[Sx_{t_{i-1}}^{(\infty)}-f_1(x_{t_{i-1}}^{(\infty)},y_{t_{i-1}}^{(\infty)})]
+\int_{t_{i-1}}^{t}Z_s^{(\infty)}dW_s,
\end{align*}
which yields
\[
dx_t^{(\infty)} = [-Sx_{t_{i-1}}^{(\infty)} + f_1(x_{t_{i-1}}^{(\infty)},y_{t_{i-1}}^{(\infty)})]dt + Z_t^{(\infty)}dW_t.
\]
By It\^{o}'s formula, we have
\begin{align*}
\EX[x_t-x_t^{(\infty)}]^2 &= \EX[x_{t_i}-x_{t_i}^{(\infty)}]^2\\
&-\EX\int_t^{t_i}2(x_s-x_s^{(\infty)})[-S(x_s-x_{t_{i-1}}^{(\infty)})+f_1(x_s,y_s)-f_1(x_{t_{i-1}}^{(\infty)},y_{t_{i-1}}^{(\infty)})]ds\\
&-\EX\int_t^{t_i}(Z_s-Z_s^{(\infty)})^2ds.
\end{align*}
Let
\begin{align*}
&\delta x_t:=x_t-x_t^{(\infty)},\quad \delta Z_t:=Z_t-Z_t^{(\infty)},\\
\text{ and } &\delta f_t:=(Sx_t-f_1(x_t,y_t))-(Sx_{t_{i-1}}^{(\infty)}-f_1(x_{t_{i-1}}^{(\infty)},y_{t_{i-1}}^{(\infty)})),
\end{align*}
then define
\begin{align*}
A_t&:=\EX\mid\delta x_t\mid^2 + \int_t^{t_i}\EX\mid\delta Z_s\mid^2ds-\EX\mid\delta x_{t_i}\mid^2\\
&\int_t^{t_i}\EX[2\delta x_s\delta f_s]ds\\
&\leq \EX[C\int_t^{t_i}\mid\delta x_s\mid(\mid x_s-x_{t_{i-1}}^{(\infty)}\mid+\mid y_s-y_{t_{i-1}}^{(\infty)}\mid)]\\
&\leq\int_t^{t_i}\a\EX\mid\delta x_s\mid^2ds + \frac{C}{\a}\int_{t}^{t_i}\EX[\mid x_s-x_{t_{i-1}}^{(\infty)}\mid^2+\mid y_s-y_{t_{i-1}}^{(\infty)}\mid^2]ds.
\end{align*}
Since
\begin{align*}
\EX\mid x_s-x_{t_{i-1}}\mid^2 &= \EX\mid\int_{t_{i-1}}^s(-Sx_r+f_1(x_r,y_r))dr+\int_{t_{i-1}}^sZ_rdW_r\mid^2\\
&\leq2\big[\EX\mid\int_{t_{i-1}}^s(-Sx_r+f_1(x_r,y_r))dr\mid^2+\EX\mid\int_{t_{i-1}}^sZ_rdW_r\mid^2\big]\\
&\leq2\big[(s-t_{i-1})\EX\int_{t_{i-1}}^s(-Sx_r+f_1(x_r,y_r))^2dr+\EX\int_{t_{i-1}}^sZ_r^2dr\big]\\
&\leq2\big[(s-t_{i-1})\int_{t_{i-1}}^s[\EX(-Sx_r+f_1(x_r,y_r)-f_1(0,0))^2+\EX(f_1(0,0))^2]dr\\
&+\EX\int_{t_{i-1}}^sZ_r^2dr\big]\\
&\leq2[(s-t_{i-1})\int_{t_{i-1}}^s\EX(Cx_r^2+2L_f(x_r^2+y_r^2)+C)dr+\EX\int_{t_{i-1}}^sZ_r^2dr]\\
&\leq Ch,
\end{align*}
then we have
\begin{align*}
\EX\mid x_s-x_{t_{i-1}}^{(\infty)}\mid^2&\leq2[\EX\mid x_s-x_{t_{i-1}}\mid^2+\EX\mid\delta x_{t_{i-1}}\mid^2]\\
&\leq C(h+\EX\mid\delta x_{t_{i-1}}\mid^2)
\end{align*}
for $t_{i-1}\leq t\leq s<t_i$.
On the other hand,
\begin{align*}
\EX\mid y_s-y_{t_{i-1}}\mid^2 &\leq C
+ C\EX\mid\int_{t_{i-1}}^se^{U(r+T)}f_2(x_r,y_r)dr+\int_{t_{i-1}}^se^{U(r+T)}g_2(x_r,y_r)dW_r\mid^2\\
&\leq C+C[(s-t_{i-1})\EX\int_{t_{i-1}}^se^{2U(r+T)}f_2^2(x_r,y_r)dr+\EX\int_{t_{i-1}}^se^{2U(r+T)}g_2^2(x_r,y_r)dr]\\
&\leq Ch,
\end{align*}
and note also that $y^{(\infty)}$ agrees with $y$ at each grid point, i.e. $y_{t_{i-1}}^{(\infty)}=y_{t_{i-1}}$, thus
\begin{align*}
\EX\mid y_s-y_{t_{i-1}}^{(\infty)}\mid^2&\leq2[\EX\mid y_s-y_{t_{i-1}}\mid^2+\EX\mid\delta y_{t_{i-1}}\mid^2]\\
&\leq C(h+\EX\mid\delta y_{t_{i-1}}\mid^2)\\
&=Ch.
\end{align*}
By the definition of $A_t$,
\begin{align*}
A_t&\leq\int_{t}^{t_i}\a\EX\mid\delta x_s\mid^2ds + \frac{C}{\a}\int_{t}^{t_i}[h+\EX\mid\delta x_{t_{i-1}}\mid^2]ds\\
&\leq \int_{t}^{t_i}\a\EX\mid\delta x_s\mid^2ds + \frac{C}{\a}[h^2+h\EX\mid\delta x_{t_{i-1}}\mid^2]
\end{align*}
and
\begin{align}\label{eq:delta x}
\EX\mid\delta x_t\mid^2&\leq\EX\mid\delta x_t\mid^2 + \int_t^{t_i}\EX\mid\delta Z_s\mid^2ds\nonumber\\
&\leq\int_t^{t_i}\a\EX\mid\delta x_{s}\mid^2ds + \bigg(\EX\mid\delta x_{t_{i}}\mid^2+\frac{C}{\a}[h^2+h\EX\mid\delta x_{t_{i-1}}\mid^2]\bigg).
\end{align}
Let $B_i:=\EX\mid\delta x_{t_{i}}\mid^2+\frac{C}{\a}[h^2+h\EX\mid\delta x_{t_{i-1}}\mid^2]$, then by Grownwall's inequality,
$\EX\mid\delta x_t\mid^2\leq B_ie^{\int_t^{t_i}\a ds}\leq B_ie^{\a h}$. Plugging it in the second inequality of \eqref{eq:delta x}, we have
\begin{align}\label{eq:Bi}
\EX\mid\delta x_t\mid^2 + \int_t^{t_i}\EX\mid\delta Z_s\mid^2ds\leq B_i(1+\a e^{\a h}h).
\end{align}
By taking $t = t_{i-1}$, we have
\begin{align*}
\EX\mid\delta x_{t_{i-1}}\mid^2 + \int_{t_{i-1}}^{t_i}\EX\mid\delta Z_s\mid^2ds\leq(1+Ch)(\EX\mid\delta x_{t_{i}}\mid^2+\frac{C}{\a}[h^2+h\EX\mid\delta x_{t_{i-1}}\mid^2]),
\end{align*}
and if we choose $\a\gg C$,
\begin{align}\label{eq:delta xi}
\EX\mid\delta x_{t_{i-1}}\mid^2 + \int_{t_{i-1}}^{t_i}\EX\mid\delta Z_s\mid^2ds\leq(1+Ch)(\EX\mid\delta x_{t_{i}}\mid^2+h^2).
\end{align}
Iterating the last inequality and recall that $\EX\mid\delta x_{t_N}\mid^2 = 0$, we have for sufficiently small $h$,
\begin{align*}
\EX\mid\delta x_{t_{i-1}}\mid^2\leq(1+Ch)^N(\EX\mid\delta x_{t_N}\mid^2+h)\leq Ch,
\end{align*}
which yields $B_i\leq Ch$, and by \eqref{eq:Bi}, we get for all $t\in[t_{i-1},t_i]$
\begin{align*}
\EX\mid\delta x_t\mid^2\leq Ch,
\end{align*}
and the right hand side of the inequality does not depend on $t$. Therefore,
\begin{align}\label{eq:delta x_t}
\sup_{-T\leq t\leq0}\EX\mid\delta x_t\mid^2 = \sup_{-T\leq t\leq0}\EX\mid x_t^{(\infty)}-x_t\mid^2\leq Ch.
\end{align}
For the forward part $y$, we have
\begin{align}\label{eq:delta y_t}
\EX\mid y_t^{(\infty)}-y_t\mid^2 &=\EX\bigg(\int_{t_{i-1}}^te^{-U(t-s)}[f_2(x_{t_{i-1}}^{(\infty)},y_{t_{i-1}}^{(\infty)})-f_2(x_s,y_s)]ds \nonumber\\
&+\int_{t_{i-1}}^te^{-U(t-s)}[g_2(x_{t_{i-1}}^{(\infty)},y_{t_{i-1}}^{(\infty)})-g_2(x_s,y_s)]dW_s\bigg)^2\nonumber\\
&\leq 2 h\EX\int_{t_{i-1}}^te^{-2U(t-s)}[f_2(x_{t_{i-1}}^{(\infty)},y_{t_{i-1}}^{(\infty)})-f_2(x_s,y_s)]^2ds\nonumber\\
&+2\EX\int_{t_{i-1}}^te^{-2U(t-s)}[g_2(x_{t_{i-1}}^{(\infty)},y_{t_{i-1}}^{(\infty)})-g_2(x_s,y_s)]^2ds\nonumber\\
&\leq[C(h+\EX\mid\delta x_{t_{i-1}}\mid^2)+C(h+\EX\mid\delta x_{t_{i-1}}\mid^2)](2h^2C+2hC)\nonumber\\
&\leq Ch^2,
\end{align}
where the last inequality is due to \eqref{eq:delta xi}.
Consider \eqref{eq:delta x_t} and \eqref{eq:delta y_t}, we have
 \[\sup_{-T\leq t\leq0}\EX\mid x_t^{(\infty)}-x_t\mid^2+\EX\mid y_t^{(\infty)}-y_t\mid^2 \leq Ch.\]
This proves the lemma.
\end{proof}

Now we are ready to state the convergence   theorem.
For $t\in[t_{i-1}, t_i]$, define $x_t^{(n)} = x_{t_{i-1}}^{(n)}$ and $y_t^{(n)} = y_{t_{i-1}}^{(n)}$. We can also define $x_t^{(n)}$ and $y_t^{(n)}$ as the linear interpolation among $x_{t_{i-1}}^{(n)}$'s and among $y_{t_{i-1}}^{(n)}$'s, respectively, and the following result also holds.

\begin{theorem}[Convergence]\label{thm1} Assume that all the conditions in Lemma~\ref{L2} are satisfied.
Then
\begin{align*}
\sup_{-T\leq t\leq0}\bigg(\EX\mid x_t-x_t^{(n)}\mid^2+\EX\mid y_t-y_t^{(n)}\mid^2\bigg)\leq
C(h+(\frac{1}{2}+Ch)^n),
\end{align*}
where $C>0$ will denote a generic constant depending only on the data $L_f, L_g, K, U, T$.
This implies the convergence of the numerical scheme \eqref{eq8}, as $h\to0$ and $n\to\infty$.
\end{theorem}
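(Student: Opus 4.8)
The plan is to obtain the result from a single triangle inequality that splits the total error into the \emph{iteration error} controlled by Lemma~\ref{L2} and the \emph{discretization error} controlled by Lemma~\ref{L3}. Concretely, for each component I would write
\[
x_t-x_t^{(n)}=\big(x_t-x_t^{(\infty)}\big)+\big(x_t^{(\infty)}-x_t^{(n)}\big),
\]
and likewise for $y$, then use $|a+b|^2\leq 2|a|^2+2|b|^2$ so that
\[
\EX\mid x_t-x_t^{(n)}\mid^2\leq 2\EX\mid x_t-x_t^{(\infty)}\mid^2+2\EX\mid x_t^{(\infty)}-x_t^{(n)}\mid^2 .
\]
The first term on the right is exactly what Lemma~\ref{L3} bounds by $Ch$, while the second is what Lemma~\ref{L2} bounds by $C_2(\tfrac{1}{2}+C_1h)^n$ at the grid nodes. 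So morally the theorem is just the sum of the two lemmas.

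First I would handle the piecewise-constant extension $x_t^{(n)}=x_{t_{i-1}}^{(n)}$ and $y_t^{(n)}=y_{t_{i-1}}^{(n)}$ on $[t_{i-1},t_i]$. For such $t$ I compare the continuous true solution directly with the grid value $x_{t_{i-1}}^{(\infty)}$,
\[
\EX\mid x_t-x_{t_{i-1}}^{(\infty)}\mid^2\leq 2\EX\mid x_t-x_{t_{i-1}}\mid^2+2\EX\mid\delta x_{t_{i-1}}\mid^2\leq Ch,
\]
where the within-interval increment bound $\EX\mid x_t-x_{t_{i-1}}\mid^2\leq Ch$ and the nodal bound $\EX\mid\delta x_{t_{i-1}}\mid^2\leq Ch$ are both already produced inside the proof of Lemma~\ref{L3}. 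The analogous estimate for $y$ is immediate once one recalls from Lemma~\ref{L3} that $y_{t_{i-1}}^{(\infty)}=y_{t_{i-1}}$, so that $\EX\mid y_t-y_{t_{i-1}}^{(\infty)}\mid^2=\EX\mid y_t-y_{t_{i-1}}\mid^2\leq Ch$. Hence on each subinterval the discretization contribution to both components is uniformly $O(h)$.

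Then I would invoke Lemma~\ref{L2} at the node $t_{i-1}$ to control $\EX\mid x_{t_{i-1}}^{(\infty)}-x_{t_{i-1}}^{(n)}\mid^2$ and $\EX\mid y_{t_{i-1}}^{(\infty)}-y_{t_{i-1}}^{(n)}\mid^2$ together by $C_2(\tfrac{1}{2}+C_1h)^n$. Adding the two components, taking the supremum over $t\in[-T,0]$, and absorbing $C_1,C_2$ together with the numerical factors into one generic constant $C$ yields
\[
\sup_{-T\leq t\leq0}\Big(\EX\mid x_t-x_t^{(n)}\mid^2+\EX\mid y_t-y_t^{(n)}\mid^2\Big)\leq C\big(h+(\tfrac{1}{2}+Ch)^n\big).
\]
For small $h$ one has $\tfrac{1}{2}+Ch<1$, so the bound tends to $0$ as $h\to0$ and $n\to\infty$, which is the asserted convergence. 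The linear-interpolation variant mentioned just before the theorem is handled identically: the only change is that the within-interval increment is now measured against the two adjacent nodal values, which is still $O(\sqrt h)$ in $L^2$ and hence contributes $O(h)$.

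Since the heavy lifting, namely the geometric decay of the iteration error and the strong order-$h$ discretization estimate, is already done in the two lemmas, the remaining work here is essentially bookkeeping. The one genuinely delicate point I expect is the uniformity in $i$ of the within-subinterval modulus-of-continuity estimate for the forward part $y$: one must verify that the mean-square increment of $y^{(\infty)}$ (equivalently of the true $y$) is $O(h)$ \emph{uniformly} over the grid, which rests on the a~priori second-moment bounds on $(x,y)$ together with the boundedness and Lipschitz hypotheses of \eqref{eq7}. Fortunately these are precisely the bounds generated en route to Lemma~\ref{L3}, so they may be quoted rather than re-derived.
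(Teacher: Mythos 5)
Your proposal is correct and takes essentially the same route as the paper's proof: for $t\in[t_{i-1},t_i]$ the error is split into the within-interval increment of the true solution, the nodal discretization error from Lemma~\ref{L3}, and the nodal iteration error from Lemma~\ref{L2}, and the constants are absorbed into a generic $C$. The only differences are presentational (you keep the explicit factors of $2$ from $|a+b|^2\leq 2|a|^2+2|b|^2$ and spell out the $y$-part identity $y_{t_{i-1}}^{(\infty)}=y_{t_{i-1}}$, both of which the paper leaves implicit).
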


\begin{proof}
Note that
\begin{align*}
&\EX\mid x_t-x_t^{(n)}\mid^2+\EX\mid y_t-y_t^{(n)}\mid^2\\
\leq&\EX\mid x_t-x_{t_{i-1}}\mid^2+\EX\mid x_{t_{i-1}}-x_{t_{i-1}}^{(\infty)}\mid^2+\EX\mid x_{t_{i-1}}^{(\infty)}-x_{t_{i-1}}^{(n)}\mid^2\\
+&\EX\mid y_t-y_{t_{i-1}}\mid^2+\EX\mid y_{t_{i-1}}-y_{t_{i-1}}^{(\infty)}\mid^2+\EX\mid y_{t_{i-1}}^{(\infty)}-y_{t_{i-1}}^{(n)}\mid^2\\
\leq& C(h+(\frac{1}{2}+Ch)^n)
\end{align*}
by regularity of the true solution as well as Lemma~\ref{L2} and Lemma~\ref{L3}. Taking supremum on both sides of the above inequality, we have
\begin{align*}
\sup_{-T\leq t\leq0}\bigg(\EX\mid x_t-x_t^{(n)}\mid^2+\EX\mid y_t-y_t^{(n)}\mid^2\bigg)\leq C(h+(\frac{1}{2}+Ch)^n).
\end{align*}

\end{proof}

\bigskip

\section{Approximation of the stochastic inertial manifold}\label{sec4}

Now we approximate the graph $\Phi$ for the inertial manifold $
\mathcal{M}$. Recall that $\Phi$ is approximated via
\begin{equation}\label{mfd999}
y(0,\om):=\Phi_T(x_0,\om),\ \om\in\Om.
\end{equation}
When $T$ is sufficiently big, $\Phi_T\approx\Phi$. So we need to
evaluate $y(0, \om)$ (which is y(0) in Section 3). To this end, we
need to compute, step by step, $x_{t_i}^{(n)}, y_{t_i}^{(n)}$ in
\eqref{eq8}. For the backward part $x_{t_i}^{(n)}$, the conditional
expectations $\EX[\cdot\mid\F_{t_i}]\in L^2(\F_{t_i})$~in \eqref{eq8}
will be approximated by their orthogonal projections $P_i$ on finite
dimensional subspaces $\L_i$ of $L^2(\F_{t_i})$, where
$L^2(\F_{t_i})$ contains all the functions in $L^2(\Om)$ that are
$\F_{t_i}$-adapted.

Indeed, instead of computing $x_{t_i}^{(n)}$ as follows
\[
x_{t_i}^{(n)}=\EX\bigg\{\bigg[x_0-\sum_{j=i}^{N-1}(S
x_{t_j}^{(n-1)}+f_1(
x_{t_j}^{(n-1)},y_{t_j}^{(n-1)}))\DD_j\bigg]\vline\F_{t_i}\bigg\},
\]
we will compute its orthogonal projection $\hat x_{t_i}^{(n)}$:
\[
\hat x_{t_i}^{(n)}=P_i\EX\bigg\{\bigg[x_0-\sum_{j=i}^{N-1}(S\hat
x_{t_j}^{(n-1)}+f_1(\hat
x_{t_j}^{(n-1)},y_{t_j}^{(n-1)}))\DD_j\bigg]\vline\F_{t_i}\bigg\}.
\]

Denoting a basis of the projection space $\L_{i}$ by
$\{\eta_1^i,\ldots,\eta_{D(i)}^i\}$, we then have
\begin{eqnarray} \label{xxx}
\hat x_{t_i}^{(n)}=\sum_{d=1}^{D(i)}\a_{i,d}^{(n)}\eta_d^i.
\end{eqnarray}
Set $\eta^i:=(\eta_1^i,\ldots,\eta_{D(i)}^i)'$ and $
\a_i^{(n)}:=(\a_{i,1}^{(n)},\ldots,\a_{i,D(i)}^{(n)})'$, with prime
denoting matrix transpose, the calculation for the backward part
$x_{t_i}^{n}$ boils down to two aspects: one is the basis $\eta^i$,
the other is the coefficient $\a_i^n$.

\begin{remark}[Basis] \label{remarkbasis}
As in \cite{Luo}, a complete orthonormal basis of $L^2(\Om, \F_t,
\mathbb P)$ is given by the Wick polynomials
$\{T_a^t(\xi);a\in\mathcal {J}\}$,
\begin{align*}
T_a^t(\xi):=\Pi_{i=1}^\infty H_{a_i}(\xi_i(t)),
\end{align*}
where $H_{a_i}(\cdot)$ are Hermite polynomials. Here
\[
\mathcal J:=\{a=(a_i,i\geq1)\mid a_i\in\{0,1,2,\ldots\},\mid
a\mid=\sum_{i=1}^\infty a_i<\infty\},
\]

\[
\xi_i(t):=\int_0^tm_i(s)dW_s,
\]
and $ m_i(s), i = 1, 2, \ldots $, are
  a set of complete orthonormal basis in
the Hilbert space $L^2([0,t])$.  One choice of the basis elements
are the Hermite polynomials of Brownian motion $W_t$. In fact,
\begin{align*}
&a^1 = (1,0,0,\cdots),\quad T_{a^1}^t(\xi(t))=
H_1(\xi_1(t))H_0(\xi_2(t))H_0(\xi_3(t))\cdots = H_1(\xi_1(t)),\\
&a^2 = (2,0,0,\cdots),\quad T_{a^2}^t(\xi(t))=
H_2(\xi_1(t))H_0(\xi_2(t))H_0(\xi_3(t))\cdots = H_2(\xi_1(t)),\\
&\quad\vdots \\
&a^n = (n,0,0,\cdots),\quad T_{a^n}^t(\xi(t))=
H_2(\xi_1(t))H_0(\xi_2(t))H_0(\xi_3(t))\cdots = H_2(\xi_1(t)).
\end{align*}
Since $\xi_1(t)=\int_0^tm_1(s)dW_s = \int_0^t1dW_s = W_t$, the basis
elements $\eta^i_d$'s are $H_d(W_{t_i})$ which are adopted in our
numerical implementation.

\end{remark}

In principle, the coefficients $\a_{i}^{(n)}$ are calculated as
follows:
\begin{align}\label{eq5}
\a_{i}^{(n)}=\beta_i^{-1}\EX\bigg[\eta^i\bigg(x_0-\sum_{j=i}^{N-1}(S\hat
x_{t_j}^{(n-1)} +f_1(\hat
x_{t_j}^{(n-1)},y_{t_j}^{(n-1)}))\DD_j\bigg)\bigg],
\end{align}
where
$\beta_i:=\bigg(\EX[\eta_p^i\eta_q^i]\bigg)_{p,q=1,\ldots,D(i)}$ are
the inner-product matrices associated with the basis.

In practice, $\beta_i's$ are computed by their simulation-based
estimators, such as Monte Carlo least squares estimators used here
\cite{Bender}. To this end, we are assuming to have $r$
(sufficiently large) independent copies $(_\l W_i,_\l\eta_d^i)$, $\l
= 1,\ldots, r$, of $(W_i,\eta_d^i)$. Here the index $\l$ denotes
copies. Then
\begin{align*}
\beta_i = \bigg(\EX[\eta_p^i\eta_q^i]\bigg)_{p,q=1,\ldots,D(i)} = \left(  \begin{array}{cccc} \EX\eta_1^i\eta_1^i & \EX\eta_1^i\eta_2^i & \cdots & \EX\eta_1^i\eta_{D(i)}^i\\
                                      \EX\eta_2^i\eta_1^i & \EX\eta_2^i\eta_2^i & \cdots & \EX\eta_2^i\eta_{D(i)}^i\\
                                      \vdots & \vdots & \cdots & \vdots\\
                                      \EX\eta_{D(i)}^i\eta_1^i & \EX\eta_{D(i)}^i\eta_2^i & \cdots & \EX\eta_{D(i)}^i\eta_{D(i)}^i \end{array}
\right),
\end{align*}
is replaced by its Monte Carlo simulation $\bar{\beta_i}$
\begin{align*}
\bar{\beta_i}\triangleq\frac{1}{r} \left(  \begin{array}{cccc} \sum_{\l=1}^{r}{_\l\eta_1^i}{_\l\eta_1^i} & {\sum_{\l=1}^{r}}{_\l\eta_1^i}{_\l\eta_2^i} & \cdots & {\sum_{\l=1}^{r}}{_\l\eta_1^i}{_\l\eta_{r}^i}\\
                                      {\sum_{\l=1}^{r}}{_\l\eta_2^i}{_\l\eta_1^i} & {\sum_{\l=1}^{r}}{_\l\eta_2^i}{_\l\eta_2^i} & \cdots & {\sum_{\l=1}^{r}}{_\l\eta_2^i}{_\l\eta_{r}^i}\\
                                      \vdots & \vdots & \cdots & \vdots\\
                                      {\sum_{\l=1}^{r}}{_\l\eta_{r}^i}{_\l\eta_1^i} & {\sum_{\l=1}^{r}}{_\l\eta_{r}^i}{_\l\eta_2^i} & \cdots & {\sum_{\l=1}^{r}}{_\l\eta_{r}^i}{_\l\eta_{r}^i} \end{array}
\right),
\end{align*}
and is further rewritten as
\[
\bar{\beta_i}=(\bar{\A_i)}'\bar{\A_i}=\frac{1}{r}\bigg(\sum_{\l=1}^{r}{_\l\eta_a^i}{_\l\eta_b^i}\bigg),\
a,b=1,\ldots,D(i),
\]
where \begin{align*} \bar{\mathcal
{A}_i}&=\frac{1}{\sqrt{r}}\bigg({_\l\eta_d^i}\bigg)\\
&=\frac{1}{\sqrt{r}} \left(  \begin{array}{cccc}
_1\eta_1^i &
_1\eta_2^i & \cdots & _1\eta_{D(i)}^i\\
_2\eta_1^i & _2\eta_2^i & \cdots & _2\eta_{D(i)}^i\\
\vdots & \vdots & \cdots & \vdots\\
_{r}\eta_1^i & _{r}\eta_2^i & \cdots & _{r}\eta_{D(i)}^i
\end{array} \right).
\end{align*}
The pseudo-inverse denoted by $A^\dagger:=(A'A)^{-1}A'$ is used in
the computation of $\bar{\beta_i}$.

The calculation of the coefficients $\a_i^{(n)}$ is to obtain the
backward part $x_{t_i}^{(n)}$. 
Note that the calculation of $\a_i^{(n)}$ also needs the forward
part $y_{t_i}^{(n-1)}$, which is calculated through Euler-Maruyama scheme. The
following formulae illustrate how to update $\a_i^{(n)}$.

\begin{align}
\a_i^{(0)}&=(\underbrace{0, 0, \ldots, 0}_{D(i)\text{ many}})'\label{eq:alpha0}, \\
(_\l\hat {x}_{t_i}^{(n-1)})_{\l=1,\ldots,r}&\triangleq(_1\hat
x_{t_i}^{(n-1)}, _2\hat
x_{t_i}^{(n-1)}, \cdots, _{r}\hat x_{t_i}^{(n-1)})'\nonumber\\
&=(\sum_{d=1}^{D(i)}\a_{i,d}^{(n-1)}{_1\eta_d^i}, \sum_{d=1}^{D(i)}\a_{i,d}^{(n-1)}{_2\eta_d^i}, \cdots, \sum_{d=1}^{D(i)}\a_{i,d}^{(n-1)}{_{r}\eta_d^i})'\label{eq:xrandom}, \\
(_\l\hat y^{(0)})_{\l=1,\ldots,r}&=\textbf{0},\label{eq:y0} \\
_\l{\textbf{f}}(t_j)^{(n-1)}&\triangleq\bigg({f(t_j, _\l\hat
x_{t_j}^{(n-1)},_\l\hat y_{t_j}^{(n-1)})}\bigg)=\bigg(S{_\l\hat
x_{t_j}^{(n-1)}}+f_1(_\l\hat
x_{t_j}^{(n-1)},_\l\hat y_{t_j}^{(n-1)})\bigg)_{\l=1,\ldots,r}\label{eq:f},\\
\a_i^{(n)}&=\frac{1}{\sqrt{r}}(\bar{\A_i})^\dagger({\textbf{x}}_0-\sum_{j=i}^{N-1}{_\l\textbf{f}}(t_j)^{(n-1)}\Delta_j)\nonumber\\
&=\frac{1}{\sqrt{r}}(\bar{\A_i})^\dagger\left[ \left(
{\begin{array}{cr}
 _1x_0 & \\
 _2x_0 & \\
 \vdots & \\
 _{r}x_0 &
 \end{array} } \right)-\left( {\begin{array}{cr}
 \sum_{j=i}^{N-1}f(t_j, _1\hat x_{t_{j}}^{(n-1)}, _1\hat y_{t_{j}}^{(n-1)}) & \\
 \sum_{j=i}^{N-1}f(t_j, _2\hat x_{t_{j}}^{(n-1)}, _2\hat y_{t_{j}}^{(n-1)}) & \\
 \vdots & \\
 \sum_{j=i}^{N-1}f(t_j, _{r}\hat x_{t_{j}}^{(n-1)}, _r\hat y_{t_{j}}^{(n-1)}) &
 \end{array} } \right)\DD_j  \right]\label{eq:alphan},\\
(_\l\hat {y}_{t_i}^{(n)})_{\l=1,\ldots,r}&\triangleq(_1\hat
y_{t_i}^{(n)}, _2\hat
y_{t_i}^{(n)}, \cdots, _{r}\hat y_{t_i}^{(n)})'\nonumber\\
&=\bigg(e^{-Uh}[_\l\hat y_{t_{i-1}}^{(n-1)}+f_2(_\l\hat
x_{t_{i-1}}^{(n-1)},_\l\hat
y_{t_{i-1}}^{(n-1)})\DD_j\nonumber\\
&+g_2(_\l\hat x_{t_{i-1}}^{(n-1)},_\l\hat y_{t_{i-1}}^{(n-1)})\DD
_\l W_j]\bigg)_{\l = 1,\ldots, r}\label{eq:yn}.
\end{align}
where $_\l\hat {\textbf{x}}_{t_i}^{(n-1)}\triangleq(_1\hat
x_{t_i}^{(n-1)}, \ldots, _{r}\hat x_{t_i}^{(n-1)})$ are independent
copies of $\hat x_{t_i}^{(n-1)}$ corresponding to independent copies
of basis functions $_\lambda\eta_d^{i}$.

Upon converge, i.e.
\begin{align}
\max_i\{\EX\mid x_{t_i}^{(n)}-x_{t_i}^{(n-1)}\mid^2+\EX\mid y_{t_i}^{(n)}-y_{t_i}^{(n-1)}\mid^2<tol\}\label{conv},
\end{align}
we have $y_{t_N}^{(n)}$ as our approximation of $y_0$.

\begin{remark}
Although we only need the final grid point value $y_{t_N}$ (recall that $t_N = 0$) as our approximation of the inertial manifold $y_0:=\Phi_T(x_0)$, the intermediate points $(x_{t_i}^{(n)},y_{t_i}^{(n)})$ are
approximated by $(\hat x_{t_i}^{(n)}, \hat y_{t_i}^{(n)})$ as follows
\begin{align}
\hat x_{t_i}^{(n)}(\om)&=\sum_{d=1}^{D(i)}\a_{i,d}^{(n)}\eta_d^i(\om),\label{eq:x}\\
\hat y_{t_i}^{(n)}(\om)&=e^{-Uh}[\hat y_{t_{i-1}}^{(n)}(\om)+f_2(\hat
x_{t_{i-1}}^{(n-1)}(\om),\hat y_{t_{i-1}}^{(n)}(\om))\DD_j+g_2(\hat
x_{t_{i-1}}^{(n-1)}(\om),\hat y_{t_{i-1}}^{(n)}(\om))\DD W_j(\om)]\label{eq:y}.
\end{align}
Therefore, this approach of approximating stochastic inertial manifold also provides a way of solving backward-forward stochastic differential equations.
As in equation \eqref{eq:x}, $x_{t_i}^{(n)}(\om)$'s are approximated
by its orthogonal projection $\hat
x_{t_i}^{(n)}(\om):=\sum_{d=1}^{D(i)}\a_{i,d}^{(n)}\eta_d^i(\om)$,
where the randomness comes from the basis functions $\eta_d^i$
rather than the coefficients $\a_{i,d}^{(n)}$. The fact that $\a$'s
are deterministic can be seen from \eqref{eq5}, since $\a$'s are
expectations. In the numerical simulation of $\a$'s, we utilize
copies of $(x,y)$'s (different copies corresponding to different
sample paths $\om$) to calculate the expectations.

\end{remark}



All the conditions required by Bender and Denk in \cite{Bender} are
satisfied in our case, so the error analysis results for the Monte
Carlo simulation also apply here.


Figure \ref{flowchart}  demonstrates the procedure of computing
stochastic inertial manifold.

\begin{figure}[!htb] 
\centering
 \includegraphics[scale=0.8]{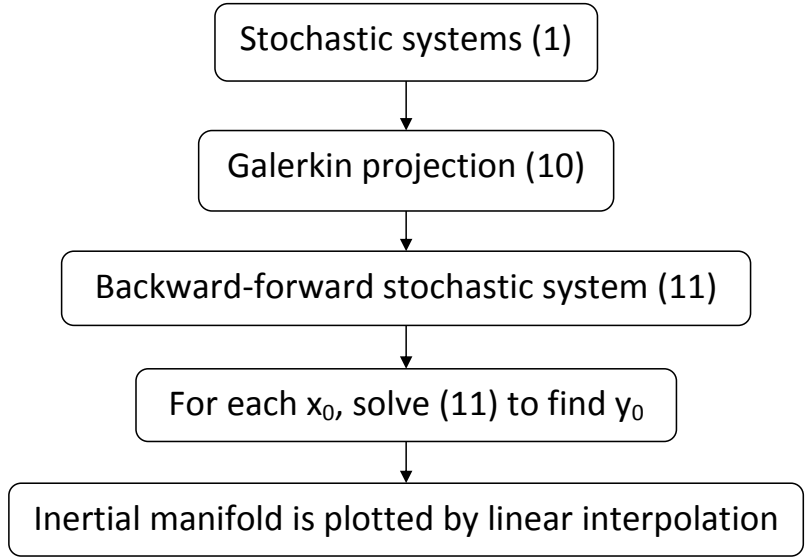}
\caption{\small \sl Procedure for computing a stochastic inertial
manifold.}\label{flowchart}
\end{figure}

Specifically, the computation is achieved in the following way as shown in Figure~\ref{schema}: We
begin with the flat manifold, i.e. let all the $y$-values be zero. In
principle, we could take any other acceptable initial manifold
and let it flow forward, and at the limit it would also approach the
desired manifold. We thus start our iteration by letting the initial
guess $y$ be zero at each $t_i$, see \eqref{eq:y0}. In order to avoid
nestings of conditional expectations, we also let initial guess $x$
values to be zero except at terminal time $t_N$, which corresponds
to letting all but the final set of coefficients $\a$ to be $0$, see \eqref{eq:alpha0}.
 Recall that we approximate each $x_{t_i}^{(n)}$
value by its finite dimensional orthogonal projection as in
\eqref{xxx}, so indeed we want to calculate $\a_i^{(n)}$. The
terminal value of $x$ is set to be a $\F_0-$measurable random
variable. At each iteration, in order to update
$\a_{i-1}^{(n)}$~\eqref{eq:alphan}, we use copies of $_\l
x_{t_{i-1}}^{(n-1)},\cdots, _\l x_{t_{N}}^{(n-1)}$ as well as $_\l
y_{t_{i-1}}^{(n-1)},\cdots, _\l y_{t_{N-1}}^{(n-1)}$ ($\l =
1,\ldots, r$) from previous iteration, and then generate copies of
$x_{t_{i-1}}^{(n)}$ by virtue of copies of basis functions $\eta$'s, see \eqref{eq:xrandom}.
Copies of $y_{t_{i-1}}^{(n)}$ are reproduced by an Euler-Maruyama scheme as in
\eqref{eq:yn}, in which different copies correspond to the sample
$\om$'s that are already chosen in basis functions $\eta$'s. This procedure
is repeated until $(x,y)$ converges in the mean square sense
\eqref{conv}. We finally acquire the terminal value of $y$ at the
stopped iteration as the approximation of $y_0$.

\begin{figure}[!htb] 
\centerline{\scalebox{0.7}{
 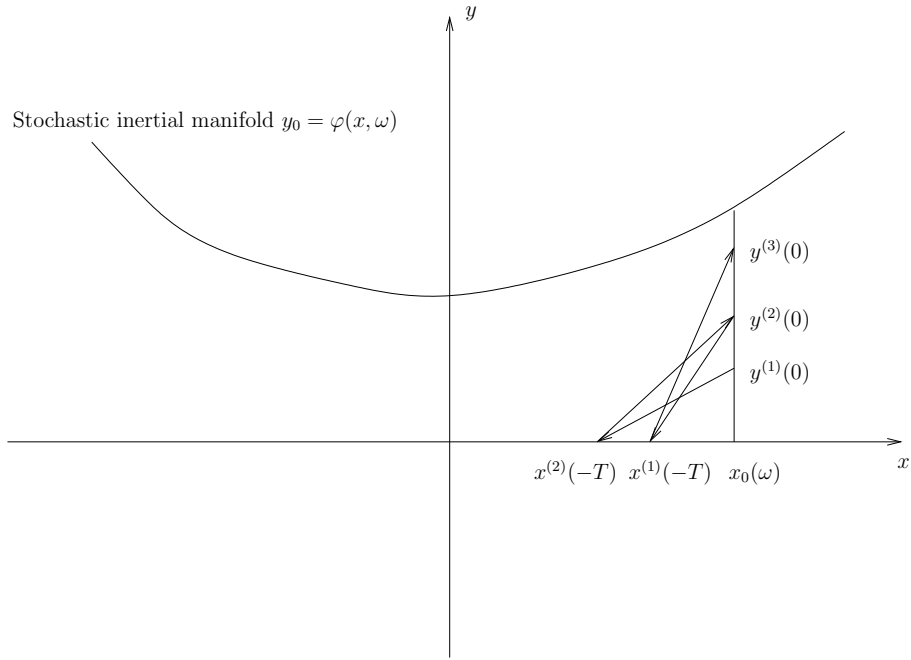}}
\caption{\small \sl Schematic diagram of the numerical
iteration.}\label{schema}
\end{figure}

\section{Examples}

In this section, we test our backward-forward numerical scheme in
two  examples. One is a system of \textsc{sde}s, and the other one is an \textsc{spde} (which is converted to a system of \textsc{sde}s).

\noindent\textbf{Example 1:  A system of stochastic ordinary differential equations}

\begin{align}
dX_t &= ( - aX_t-X_tY_t)dt,   \label{eq999} \\
dY_t &= (-Y_t(1+2Y_t)^+ + X_t^2)dt + \sigma Y_t \circ dW_t,   \label{eq999abc}
\end{align}
where $W_t$ is a scalar Wiener process, $a$ and $\sigma$ are real
parameters, $\circ$ indicates the Stratonovich interpretation of the
noise term and $f^+\triangleq\max(f,0)$.

 Figure \ref{eg1pplane}  is the phase portrait for the deterministic counterpart of
Example 1 ($\sigma=0$).
\begin{figure}[!htb] 
\centerline{
 \includegraphics[scale=0.5]{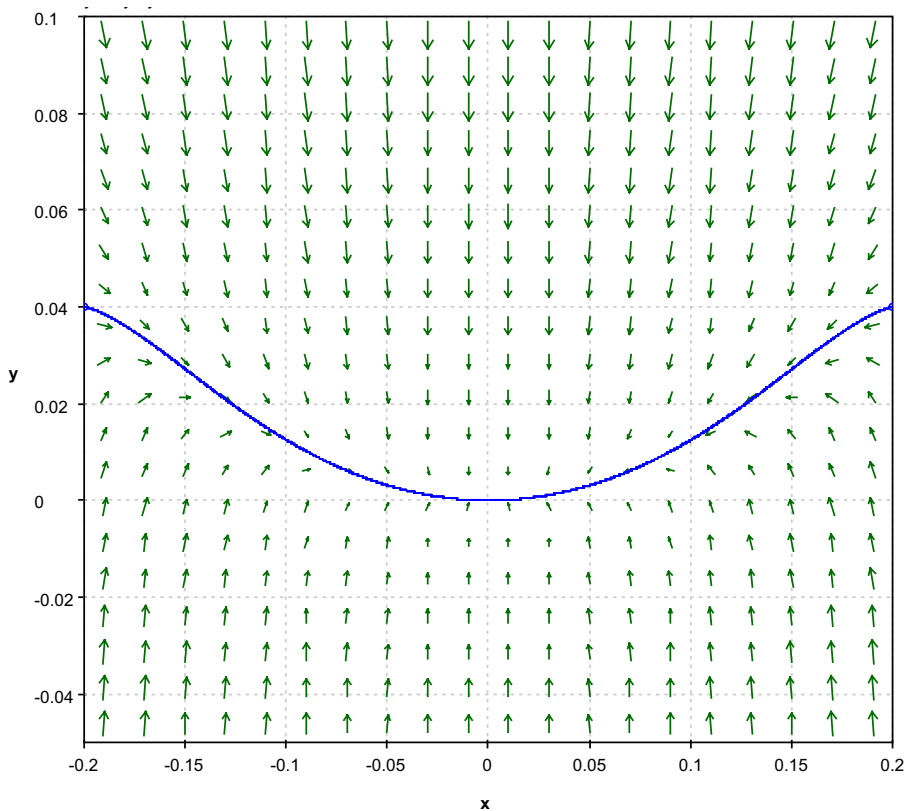}}
\caption{\small \sl Slow manifold for the deterministic counterpart
of Example 1: $\sigma=0$ and $a=0.1$.}\label{eg1pplane}
\end{figure}

Take $\sigma=0.1$ and $a=0.1$. Figure \ref{eg1tra} shows several
sample solution paths  of the \textsc{sde} system \eqref{eq999}-\eqref{eq999abc}.


\begin{figure}[!htb] 
\centering
 \includegraphics[scale=0.6]{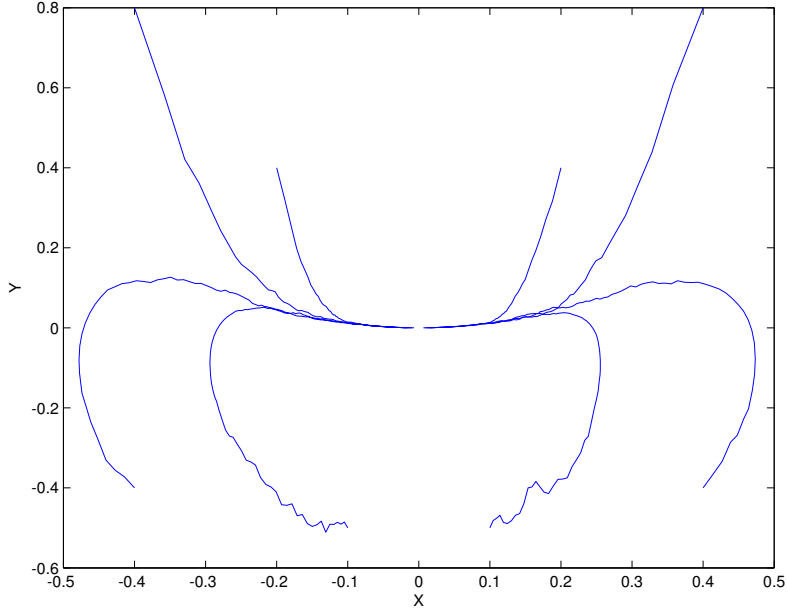}
\caption{\small \sl A few solution paths of Example 1: $a=0.1$ and
$\sigma=0.1$.}\label{eg1tra}
\end{figure}

Roberts \cite{Roberts} introduced a normal form transform method for
 stochastic differential systems with both slow modes and quickly decaying modes. The (approximate)
formula for the slow manifold (a type of inertial manifold) of
Example 1 obtained via his method
\footnote{\url{http://www.maths.adelaide.edu.au/anthony.roberts/sdesm.html}}.
is
\begin{align}
y \approx x^2 + 0.1x^2\int_{-\infty}^te^{-(t-\tau)}dW_\tau(\om)
\end{align}
and
\begin{align}
x \approx x + 0.1x\int_{-\infty}^te^{-(t-\tau)}dW_\tau(\om).
\end{align}
In Figure~\ref{eg1slowmfd}, we plot the stochastic inertial manifold according to
$y\approx x^2 + 0.1x^2\int_{-50}^0e^{\tau}dW_\tau$ and compare it
with the stochastic inertial manifold from our backward-forward
method for one sample path $\omega$. There is a remarkable agreement of the
shapes of the stochastic slow manifold obtained by the two distinct methods.
For four samples in Figure \ref{real},  Figure~\ref{difference} then shows the discrepancy between the stochastic inertial manifold  realised on three realisations $\bar{\om}, \tilde{\om}, \hat{\om}$ and that realised on $\om$ via our backward-forward approach, respectively, i.e. $M(\bar{\om})-M(\om)$, $M(\tilde{\om})-M(\om)$, and $M(\hat{\om})-M(\om)$ in red, blue and green color.



\begin{figure}[!htb] 
\centering
 \includegraphics[scale=0.6]{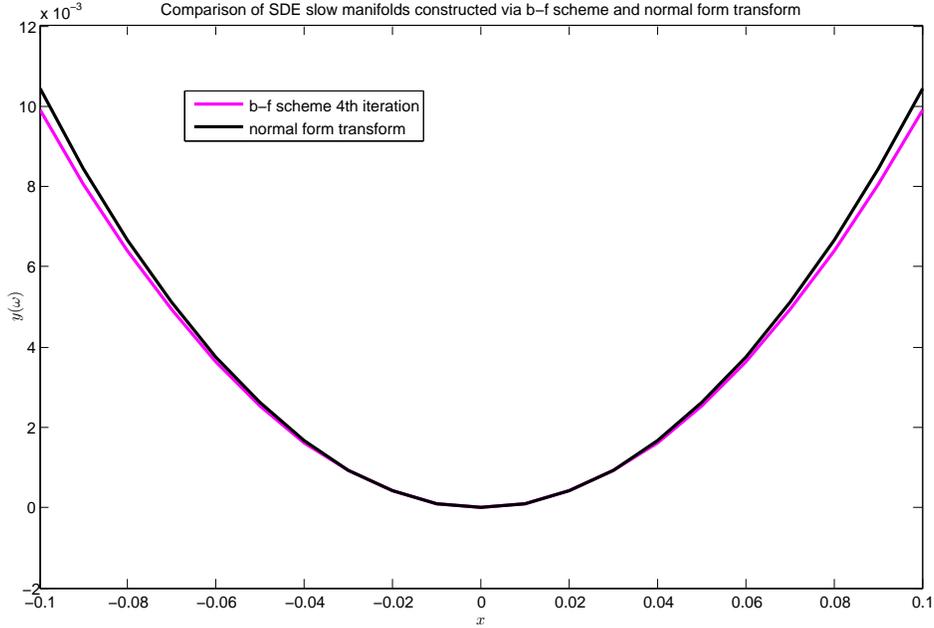}
\caption{\small \sl  Stochastic inertial manifold $\mathcal{M}
(\om)$ calculated by the backward-forward method and by the normal
form transform method for Example 1.
Online version: Magenta from the backward-forward method and black from
the normal form transform method.  }\label{eg1slowmfd}
\end{figure}

\begin{figure}[!htb] 
\centering
 \includegraphics[scale=0.6]{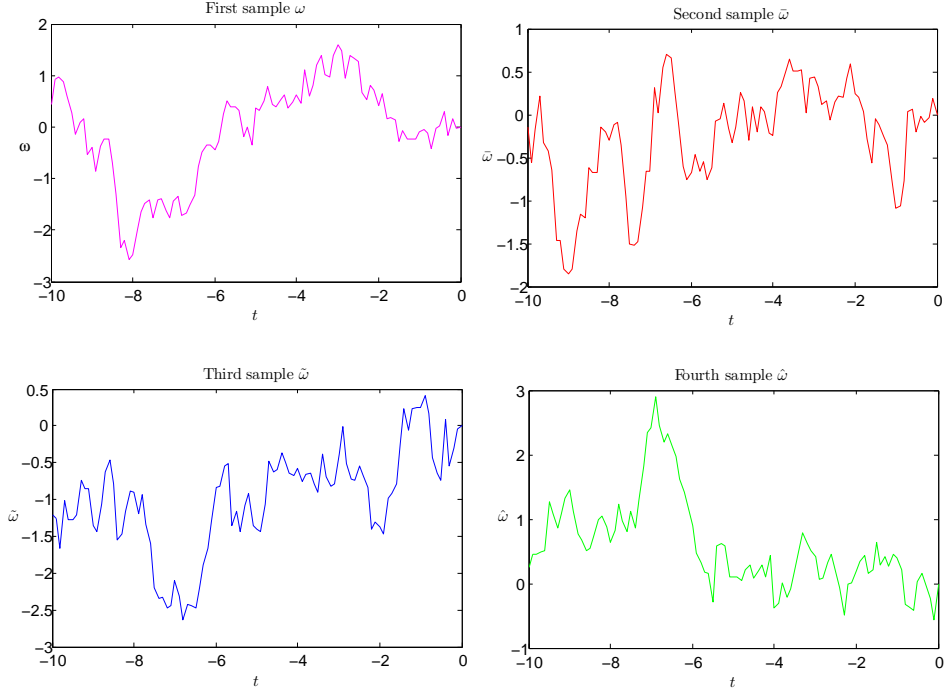}
\caption{\small \sl Four samples \textcolor{magenta}{$\omega$}, \textcolor{red}{$\bar{\omega}$}, \textcolor{blue}{$\tilde{\om}$} and \textcolor{green}{$\hat{\om}$} for Example 1.}\label{real}
\end{figure}

\begin{figure}[!htb] 
\centering
 \includegraphics[scale=0.6]{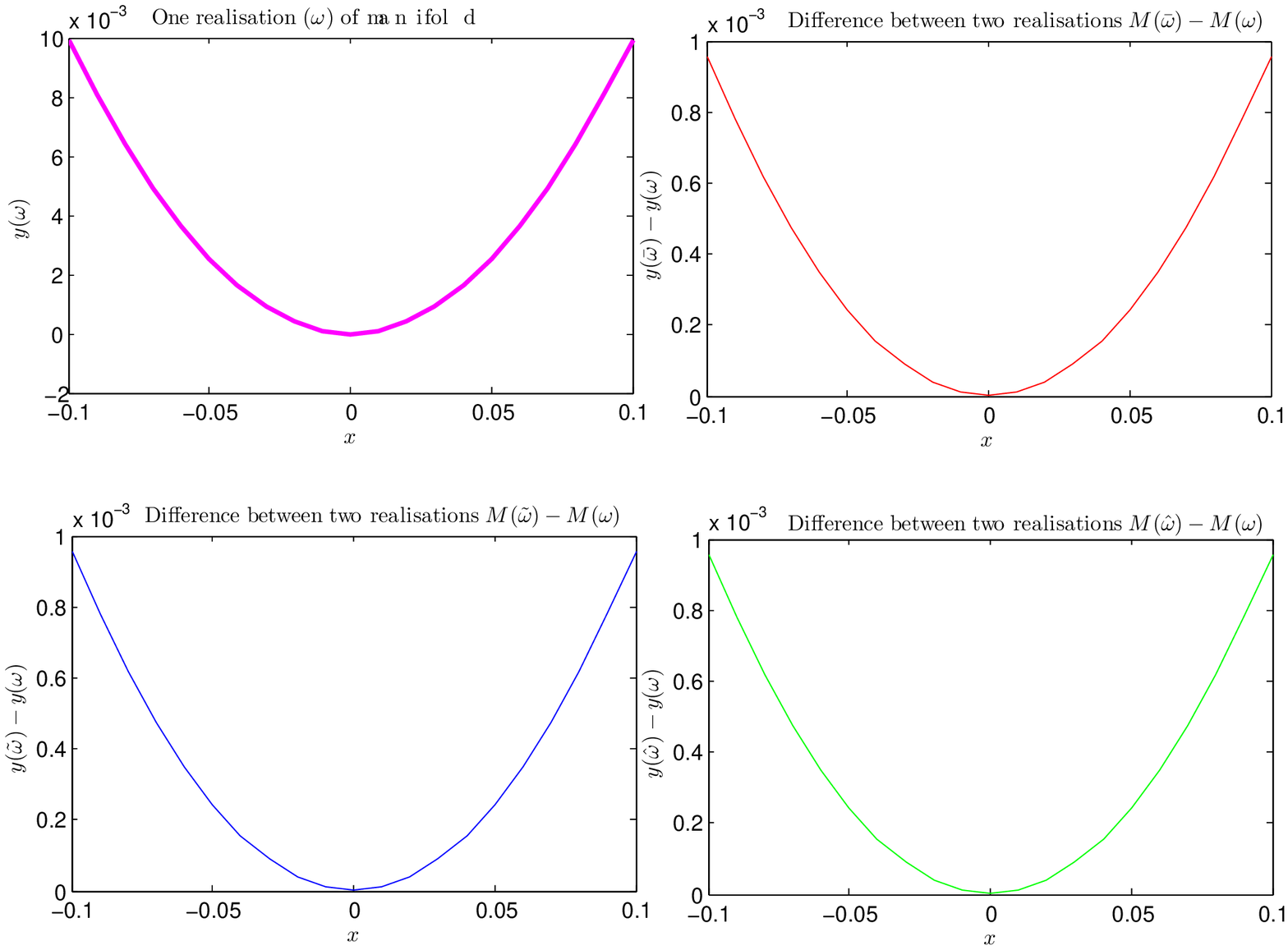}
\caption{\small \sl Discrepancy between different realisations of  the stochastic inertial manifold in Example 1 via the backward-forward method. Online version: Magenta (top left) for the stochastic manifold realised on sample $\textcolor{magenta}{\om}$, while red (top right), blue (bottom left) and green (bottom right) for discrepancies between stochastic manifold realised on samples $\textcolor{red}{\bar{\om}}$, $\textcolor{blue}{\tilde{\om}}$ and $\textcolor{green}{\hat{\om}}$ and stochastic manifold realised on sample $\textcolor{magenta}{\om}$. }\label{difference}
\end{figure}

\bigskip

\noindent\textbf{Example 2: A stochastic partial differential
equation }

We examine how effective the stochastic inertial manifold approach
is in assisting the simulation of the long-term dynamics of an
\textsc{spde}. We consider a stochastic partial differential
equation
\begin{align}
du_t&=\nu \partial_{xx} u_tdt+(u_t-u_t^3)dt+u_tdW_t, \quad 0<x<1, \label{spde}  \\
u_t(0)&=u_t(1)=0, \notag\\
u_0(x)&=\sqrt 2 \sin(\pi x) \notag,
\end{align}
where $\nu=0.01$, $W_t$ is a scalar Wiener process, and It\^{o} interpretation of the noise term is
adopted.

Let us first consider the dimension of the inertial manifold. Note that the
eigenvalues of the operator $\nu\partial_{xx}+I$ are $-\nu i^2\pi^2+1$ with the corresponding eigenmodes $e_i(x)=\sqrt{2} \sin(i \pi x)$, for $i=1, 2,\cdots$. Thus the dimension of the deterministic unstable
eigenspace is 3. In this example, $H=L^2(0, 1)$. Let $P$ be the orthogonal projection to the (deterministic) unstable eigenspace $H^+$, spanned by  eigenmodes $e_i, i=1, 2, 3$.

The existence of the stochastic inertial manifold requires the
nonlinear terms to be the globally Lipschitz. Here, in Example 2,
although the drift term $f(u):=u-u^3$ is only locally Lipschitz, we
can prepare the equation by replacing $f$ by the cutoff function
$\tilde f$ which is defined to be $f$ in a bounded neighborhood
centered at the origin, and 0 otherwise. The details of this
procedure were described by Da Prato and Debussche \cite{DaPrato}.

With the Galerkin projection $u=\sum_i u_i(t) e_i(x)$,  the evolutionary equations we will be working on become
\begin{align*}
\tilde p:\begin{cases}
&du_1(t)=\nu\l_1u_1(t)dt + \langle  u-u^3, e_1\rangle dt + u_1(t)dW_t,\\
&du_2(t)=\nu\l_2u_2(t)dt + \langle u-u^3,e_2\rangle dt + u_2(t)dW_t,\\
&du_3(t)=\nu\l_3u_3(t)dt + \langle u-u^3,e_3\rangle dt +
u_3(t)dW_t,
\end{cases}
\end{align*}
\[
\tilde q:du_4(t)=\nu\l_4u_4(t)dt + \langle u-u^3,e_4\rangle dt +
u_4(t)dW_t,
\]
where we only keep four Fourier modes $u_i(t)e_i, i = 1, 2, 3, 4$ for $u$; $u_5(t), u_6(t),\ldots$ are
simply ignored, as numerical simulations indicate that they are negligible in this specific case.



It is difficult to visualize the  stochastic
inertial manifold directly. But we can plot a ``point" $p$ in $H^+$ and the corresponding ``point" $q = \Phi(p,\om)e_4(x)$ on the inertial manifold, separately.  The ``point" $p$ may be represented as
$p \triangleq\sum_{i=1}^3 u_i(0,\omega) e_i(x)$ with coordinates~$ (u_1(0,\om),u_2(0,\om),u_3(0,\om))$. The corresponding ``point"  $ q  $ on the stochastic inertial manifold is then computed through our backward-forward approach.


We plot three different realizations of a point $p$ versus space variable
$x$ in the left panel of Figure~\ref{fig:spde}, and then plot the
corresponding point $q$ versus space variable $x$ separately in the right panel.



\begin{figure}[!htb]
\begin{tabular}{cc}
 \includegraphics[scale=0.5]{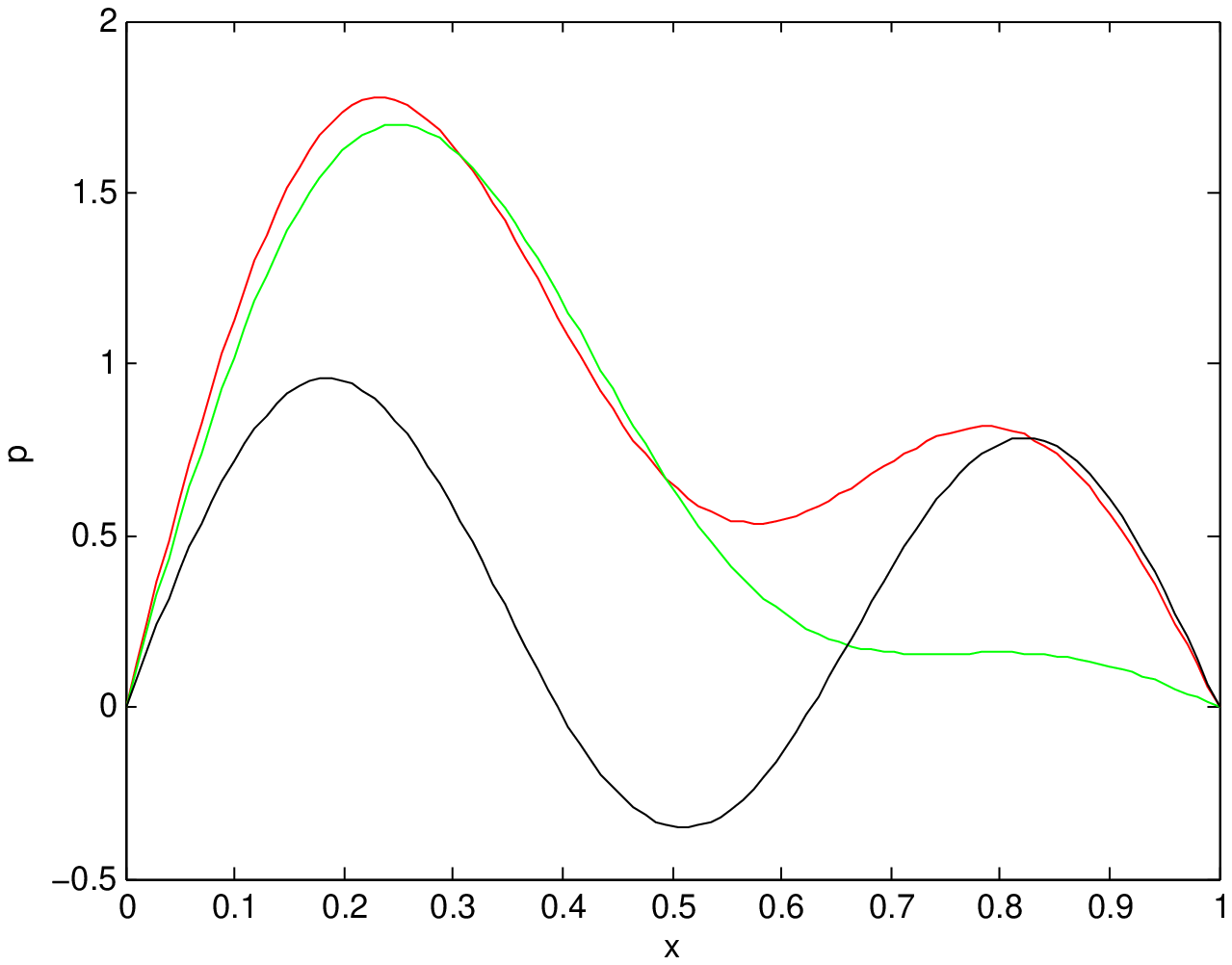}&\includegraphics[scale=0.5]{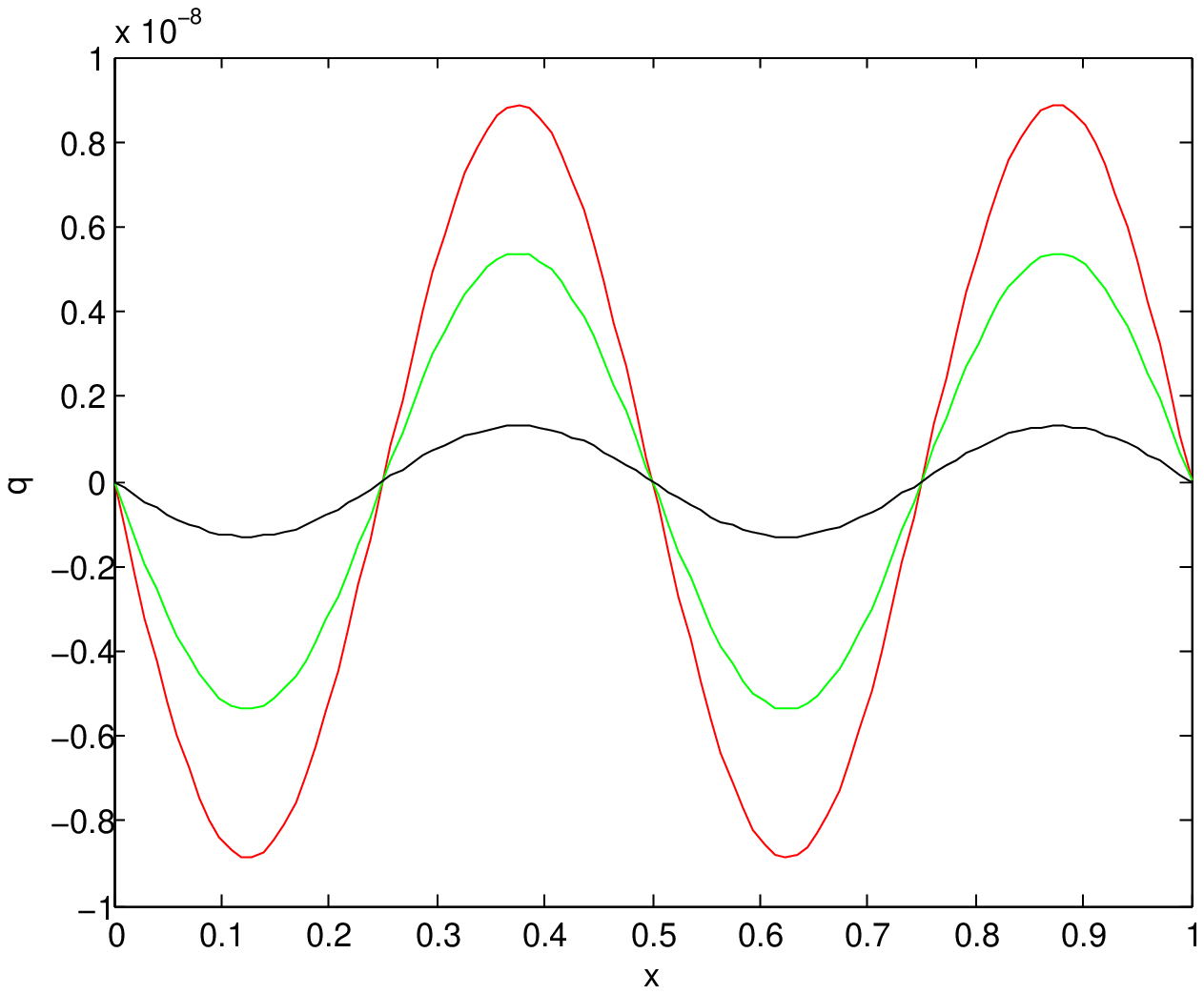}
 \end{tabular}
\caption{\small \sl  Example 2 -- A ``point" $p=u_1(0,\om)   e_1(x)+u_2(0,\om) e_2(x)+u_3(0,\om) e_3(x)$, with $u_1, u_2$
and $u_3$ being uniformly distributed random numbers, in  
$H^+$ and the corresponding ``point" $q$ in the stochastic inertial manifold: Three samples of $p$ (left panel) and the corresponding three samples of $q$ (right panel).
 Online version: Corresponding $p$ and $q$ samples
  have the same color (green, red or black).}\label{fig:spde}
\end{figure}

\section{Discussion and conclusion}

In this paper, we have devised a backward-forward numerical approach for computing stochastic inertial manifolds for
stochastic evolutionary equations, including higher dimensional stochastic ordinary differential equations and stochastic partial differential equations. This approach is based on the stochastic inertial manifold theory of Da Prato and Debussche \cite{DaPrato}, which requires a backward-forward approximation formulation. In fact, our approach also provides a stand-alone numerical scheme for solving backward-forward \textsc{sde}s.

Unlike deterministic evolutionary equations,  we need to guarantee the adaptedness  of solution processes with respect to an appropriate filtration for stochastic evolutionary equations. This makes the computation in the backward part of our numerical approach cumbersome, as it involves simulating conditional expectations which further requires a random basis. It would be nice to have a numerical scheme that uses only forward simulations. Clearly, a numerical scheme involving only forward simulations would be desirable; such schemes have been devised for deterministic problems (e.g. \cite{Gear, Zagaris}) and we are working on adapting them for the stochastic case.

\bigskip

  {\bf Acknowledgements.}   We thank Arnaud Debussche,  Arnulf Jentzen, Edriss Titi, and  Jianfeng Zhang for helpful discussions.


\end{document}